\documentclass[11pt]{article}

\textheight=228 true mm \textwidth=170 true mm

\usepackage[T2A]{fontenc}
\usepackage[cp1251]{inputenc}
\usepackage{amsthm}
\usepackage{amsfonts}
\usepackage{eufrak}
\usepackage{amssymb}
\usepackage{amsmath}
\usepackage{cite}

\tolerance=9000 \hbadness=9000

\oddsidemargin +0.44 cm \topmargin -1 cm

\makeatletter
\makeatother
\makeatletter

\renewcommand{\@seccntformat}[1]{\csname the#1\endcsname. }

\makeatother
\usepackage{bm}
\begin{document}
\newtheoremstyle{mytheorem}
  {\topsep}   
  {\topsep}   
  {\itshape}  
  {}       
  {\bfseries} 
  {.}         
  {5pt plus 1pt minus 1pt} 
  {}          
\newtheoremstyle{myremark}
  {\topsep}   
  {\topsep}   
  {\upshape}  
  {}       
  {\itshape} 
  {.}         
  {5pt plus 0pt minus 1pt} 
  {}          
\theoremstyle{mytheorem}
\newtheorem{theorem}{Theorem}[section]
 \newtheorem{theorema}{Theorem}
 \newtheorem*{A}{Theorem A}
  \newtheorem*{D}{Linnik theorem}
  \newtheorem*{B}{Theorem B}
 \newtheorem*{E}{O'Connor theorem}

\newtheorem{proposition}[theorem]{Proposition}
 \newtheorem{lemma}[theorem]{Lemma}
\newtheorem{corollary}[theorem]{Corollary}
\newtheorem{definition}{Definition}[section]
\theoremstyle{myremark}
\newtheorem{remark}[theorem]{Remark}
 \noindent{\textbf{Characterization of Probability Distributions on Locally Compact Abelian Groups}} 

\noindent{\textbf{by the Property of Identical Distribution of
 Linear Forms with Random Coefficients}}

\bigskip

\noindent\textbf{Gennadiy Feldman} https://orcid.org/0000-0001-5163-4079

\bigskip

\noindent{\textbf{Abstract}}

\bigskip

\noindent Let $X$ be a locally compact Abelian group. We consider linear
forms of independent random variables 
with values in $X$. In doing so, one of the coefficients
of the linear forms is a random variable 
with a Bernoulli distribution. For some classes of  groups 
we describe possible distributions of random variables
provided that the linear forms are identically distributed. 
The proof of the theorems is reduced to solving some functional 
equations on the character group of the group $X$,
and to solve functional equations, methods of abstract harmonic analysis are used.

\bigskip

\noindent {\bf Mathematics Subject Classification.}   60B15, 62E10, 43A25,  43A35.

\bigskip

\noindent{\bf Keywords.} Locally compact Abelian group, ${\bm a}$-adic
solenoid, linear forms, random coefficients,  
hyperbolic secant distribution,   Haar distribution

\bigskip

\section{Introduction}

A large number of studies have been devoted to the problems of characterizing 
probability distributions in the case where independent random variables take 
values in a locally compact Abelian group $X$. As a rule, probability distributions are 
characterized by some properties of linear forms of independent 
random variables, provided that the 
coefficients of the linear forms are either topological automorphisms 
of $X$ or integers (see, e.g., \cite{Fe10, Fe20, {F-RIMA2025}, Fe21, M2023} and also \cite{Fe5} 
and \cite{Febooknew}, where one can find additional 
references). In this article, we consider some characterization problems on groups in 
the case where one of the coefficients of the linear forms is a random variable 
with a Bernoulli distribution.
 Although characterization of probability distributions  on the real line  
 by properties of linear forms with random coefficients
have long been studied (see \cite{LiZi, {ChP},
 {ChGe}, 
{Kl2023}}), on groups 
such a problem, as far as we know, is considered for the first time.

Let us recall that a distribution $\mu$ on the real line is a hyperbolic 
secant distribution\footnote{On the hyperbolic secant distribution, 
its properties, connections with other classical distributions and generalizations, 
see the monograph by M.J. Fischer \cite{Fi}.}
if its characteristic function is represented in the form
\begin{equation*}\label{28.1} 
\hat\mu(s)=\frac{2}{e^{\sigma s}+{e^{-\sigma s}}},\quad s \in\mathbb{R},
\end{equation*}
where $\sigma$ is a real number. It is convenient for us do not exclude the case,
when $\sigma=0$, i.e., to assume that the 
degenerate distribution concentrated at the zero is also a hyperbolic secant 
distribution.

A characterization 
theorem for the hyperbolic secant distribution was first proved by L.B.~Klebanov
in \cite{Kl2023}.
\begin{A} [{\!\!\protect\cite{Kl2023}}] Let $\xi_1$, $\xi_2$, and $\xi_3$ be independent identically distributed 
real-valued random variables with a symmetric  
distribution  $\mu$.
Let $\alpha$ be a random variable with a Bernoulli distribution taking values
$0$ and $1$ with probability $\frac{1}{2}$.
Assume that $\alpha$ is independent with 
the random vector $(\xi_1, \xi_2, \xi_3)$.
Then the following statements are equivalent:
\renewcommand{\labelenumi}{\rm(\roman{enumi})}
\begin{enumerate}
  
\item	

the linear forms 
$2\xi_1$ and \ $\xi_1+\xi_2+2\alpha\xi_3$ 
are identically distributed; 

\item
$\mu$ is a hyperbolic secant distribution.
 \end{enumerate}
\end{A}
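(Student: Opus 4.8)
The plan is to rewrite condition (i) as a functional equation for the characteristic function $\hat\mu$ and then solve it; since $\mu$ is symmetric, $\hat\mu$ is real-valued, which simplifies the analysis. Put $f=\hat\mu$. Conditioning on $\alpha$ and using the independence assumptions, the characteristic function of $\xi_1+\xi_2+2\alpha\xi_3$ equals $\tfrac12 f(s)^2\bigl(1+f(2s)\bigr)$, whereas that of $2\xi_1$ equals $f(2s)$; hence (i) is equivalent to
\[
2f(2s)=f(s)^2\bigl(1+f(2s)\bigr),\qquad s\in\mathbb R .
\]
The implication (ii)$\Rightarrow$(i) is then a one-line verification: for $f(s)=1/\cosh(\sigma s)$ the identity $1+\cosh(2\sigma s)=2\cosh^2(\sigma s)$ gives $\tfrac12 f(s)^2\bigl(1+f(2s)\bigr)=1/\cosh(2\sigma s)=f(2s)$, and $\sigma=0$ corresponds to the distribution degenerate at $0$.

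For (i)$\Rightarrow$(ii) I would first solve the equation for $f(2s)$, getting $f(2s)=f(s)^2/(2-f(s)^2)$; since $|f|\le1$ the denominator is $\ge1$, so $f\ge0$ on $\mathbb R$, and if $f(t_0)=0$ for some $t_0$ then $f(t_0/2^n)=0$ for every $n$, contradicting $f(0)=1$ by continuity. Thus $f>0$ everywhere. Setting $g=1/f\ge1$, the equation becomes the Chebyshev-type relation $g(2s)=2g(s)^2-1$, and writing $g=\cosh L$ with $L=\operatorname{arccosh} g$ (continuous, even, $L\ge0$, $L(0)=0$), injectivity of $\cosh$ on $[0,\infty)$ turns it into
\[
L(2s)=2L(s),\qquad s\in\mathbb R .
\]

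The crucial point is that $L(2s)=2L(s)$ by itself is far from rigid — it has many non-linear continuous solutions — so the conclusion has to come from the positive definiteness of $f$, entering through the behaviour of $f$ near the origin. The key observation is that $v(x):=L(e^x)/e^x$ is continuous and $\log2$-periodic, hence bounded, so $L(s)/s$ stays bounded as $s\to0^+$. Since $L(s)=\operatorname{arccosh}(1/f(s))\sim\sqrt{2\bigl(1-f(s)\bigr)}$ as $s\to0$, this boundedness forces $\limsup_{s\to0}\bigl(1-f(s)\bigr)/s^{2}<\infty$. For a characteristic function that is possible only when $\mu$ has a finite second moment: writing $1-f(s)=\int(1-\cos sx)\,d\mu(x)$ and applying Fatou's lemma along a sequence $s\to0$ gives $\liminf_{s\to0}\bigl(1-f(s)\bigr)/s^{2}\ge\tfrac12\int x^{2}\,d\mu$. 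Once $\sigma^{2}:=\int x^{2}\,d\mu<\infty$, the standard expansion yields $\bigl(1-f(s)\bigr)/s^{2}\to\sigma^{2}/2$, hence $L(s)/s\to\sigma$ as $s\to0^{+}$; but a $\log2$-periodic function of $\log s$ that has a limit as $s\to0^{+}$ must be constant, so $L(s)\equiv\sigma|s|$, i.e. $f(s)=1/\cosh(\sigma s)$ and $\mu$ is a hyperbolic secant distribution.

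The step I expect to be the main obstacle is precisely this passage from $L(2s)=2L(s)$ to $L(s)=\sigma|s|$: one must recognize that positive definiteness is used only through the local behaviour of $f$ at $0$ and then combine it with the scaling/periodicity built into the equation to rule out the spurious solutions. Everything before it — the reduction to the functional equation, the positivity of $f$, and the $\operatorname{arccosh}$-substitution — and the verification of (ii)$\Rightarrow$(i) are routine.
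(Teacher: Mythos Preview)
Your argument is correct. The reduction to the functional equation, the positivity of $f$, the substitution $L=\operatorname{arccosh}(1/f)$ leading to $L(2s)=2L(s)$, and the use of Fatou together with the $\log 2$-periodicity of $L(e^{x})/e^{x}$ to force $L(s)=\sigma|s|$ all go through as you describe.

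Your route, however, differs from the one the paper records. In Remark~\ref{re18.1} the paper derives Theorem~A from Linnik's theorem: one fixes a small $s_{0}>0$, chooses $\sigma$ so that $f(s_{0})=1/\cosh(\sigma s_{0})$, and then observes that, since both $f$ and $g(s)=1/\cosh(\sigma s)$ satisfy the same recursion $f(2s)=f(s)^{2}/(2-f(s)^{2})$, one obtains $f(s_{0}/2^{n})=g(s_{0}/2^{n})$ for all $n$; Linnik's theorem (analyticity of $g$) then gives $f\equiv g$. Klebanov's original proof uses the machinery of intensively monotone operators. By contrast, you never invoke an external identification principle: you convert the problem into the scaling equation $L(2s)=2L(s)$ and extract the missing rigidity directly from positive definiteness, via the finite-second-moment argument. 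This makes your proof essentially self-contained and slightly more informative (it shows explicitly that the second moment is what singles out the hyperbolic secant among all continuous solutions of the scaling equation), at the cost of a somewhat longer chain of elementary estimates; the paper's Linnik-based argument is shorter but relies on a nontrivial black box.
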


The main part of the article (\S 2 and \S 3) is devoted to generalization of
Theorem A to the case where 
independent random variables take values in a locally compact Abelian group $X$. 
Our goal is to describe
 the possible distributions $\mu$.
The proof is reduced to describing 
the solutions of some functional equation on the  character group of 
 the group $X$. Different methods are used to find the solutions of this equation  
for the  various groups considered in the article.
In \S 4 we prove a theorem on the characterization of the 
symmetric two-point distribution
on locally compact Abelian groups containing no nonzero  compact subgroups. 
This result generalizes another theorem by Klebanov, proved in \cite{Kl2023}
for the real line.

It should be noted that many authors have studied various functional 
equations in $\mathbb{R}^n$, on groups and semigroups which either 
directly related to characterization 
problems or close to them (see, e.g.,  \cite{{A1},  {A2}, 
  {AS1},  {SaSh1},
  {Sh1},  {St3}}). 

Let us also note the following. Characterization of
 probability distributions by some properties of linear forms of independent 
random variables is reduced to solving some functional equations. 
Sometimes, these equations are solved first in the class of continuous functions, and 
then  from the obtained solutions we select those that are characteristic functions.
The functional equations arising in this article are solved directly in the class 
of characteristic functions. We do not describe continuous solutions of the 
corresponding equations.

Throughout the article we use standard results of abstract harmonic analysis.
We use the monograph by E.~Hewitt and K.A.~Ross
 \cite{HeRo1}  for references.
Let $X$ be a second countable locally compact Abelian group. We will
consider only such groups, without mentioning it specifically.
Denote by $Y$ the character
group of the group $X$, and by  $(x,y)$ the value of a character
$y \in Y$ at an element $x \in X$. 
For a closed subgroup $K$ of $X$,
denote by   $A(Y, K) = \{y \in Y: (x, y) = 1$ for all $x \in K \}$
its annihilator. 
Let $X_1$ and $X_2$ be locally compact Abelian groups with character groups 
 $Y_1$ and $Y_2$ respectively. Let $\alpha:X_1\rightarrow X_2$
be a continuous homomorphism. The adjoint homomorphism
$\widetilde\alpha: Y_2\rightarrow Y_1$
is defined by the formula  $(\alpha x_1,
y_2)=(x_1, \widetilde\alpha y_2)$ for all $x_1\in X_1$, $y_2\in
Y_2$. Put $X^{(2)}=\{2x:x\in X\}$.
 A group $X$ is said to be a 
 Corwin group if
$X^{(2)}=X$.
Denote by $\mathbb{Q}$ the  group 
of rational numbers, 
by $\mathbb{Z}$ the  group of integers,
 by
 $\mathbb{Z}(2)=\{0, 1\}$ the  group of integers 
modulo $2$, and by $\mathbb{T}$ 
the circle group.

Let $\mu$ be a probability distribution  on the group $X$.
Denote by
\begin{equation*}
\hat\mu(y) =
\int\limits_{X}(x, y)d \mu(x), \quad y\in Y,
\end{equation*}
   the characteristic function (Fourier transform) of
the distribution  $\mu$.  
Let $\nu$ be a probability
distribution  on the group $X$. The convolution
$\mu*\nu$ is defined by the formula
$$
 \mu*\nu(B)=\int\limits_{X}\mu(B-x)d \nu(x)
$$
for any Borel subset $B$ of $X$.  Let $x\in X$. Denote by $E_x$
the degenerate distribution concentrated at the element $x$.
We say that $\mu$ is symmetric if $\mu(B)=\mu(-B)$ for any Borel subset $B$ of $X$.
It is obvious that $\mu$ is symmetric if and only if $\hat\mu(y)$ 
is a real-values function.
For a compact subgroup $K$ of the group $X$ denote by $m_K$
the Haar distribution on $K$. The
characteristic function of the   distribution $m_K$ is of the
 form
\begin{equation}\label{26.13}
\widehat m_K(y)=
\begin{cases}
1  & \text{\ if\ }\   y\in A(Y, K),
\\  0  & \text{\ if\ }\ y\not\in
A(Y, K).
\end{cases}
\end{equation}

Let $f(y)$ be a function on the group $Y$. We say that $f(y)$ is a characteristic function
if there is a distribution $\mu$ on the group $X$ such that $f(y)=\hat\mu(y)$ 
for all $y\in Y$.

\section{Characterization of Probability Distributions on 
a  Locally  Compact Connected Abelian Group of Dimensional 1}

In this section we study an analogue of Theorem A in the case where 
random variables take values in a    locally  compact connected Abelian 
group $X$ of dimensional 1.  
Recall the definition of the ${\bm a}$-{adic solenoid. 
Consider a sequence  ${\bm a}=(a_0, a_1,a_2,\dots)$, where  $a_j$ are natural
and  $a_j > 1$.  Let $\Delta_{\bm a}$ be the
group of  ${\bm a}$-adic integers (\!\!\cite[(10.2)]{HeRo1}).  As a
set $\Delta_{\bm a}$ coincides
with the Cartesian product $\mathop{\mbox{\rm\bf
P}}\limits_{n=0}^\infty\{0,1,\dots ,a_n-1\}$.
Put ${u}=(1, 0,\dots,0,\dots)\in \Delta_{\bm a}$.
Denote by
 $B$ a subgroup of the group
$\mathbb{R}\times\Delta_{\bm a}$ of the form
$B=\{(n,nu)\}_{n=-\infty}^{\infty}$. The
factor-group $\Sigma_{\bm a}
=(\mathbb{R}\times\Delta_{\bm a})/B$ is called
  the ${\bm a}$-{adic solenoid}.   The
group $\Sigma_{\bm a}$ is  compact, connected, and has
dimension 1  (\!\!\cite[(10.12), (10.13),
(24.28)]{HeRo1}). The character group of the group
$\Sigma_{\bm a}$ is topologically isomorphic to a discrete additive
group $
 H_{\bm a}$ of the form
 \begin{equation*}\label{26.12}
H_{\bm a}=
\left\{\frac{m}{a_0a_1 \cdots a_n} : \ n = 0, 1,\dots; \ m
\in {\mathbb{Z}} \right\}
\end{equation*}
(\!\!\cite[(25.3)]{HeRo1}).

We need the following lemmas.

\begin{lemma}\label{l1} Let  $X$ be a locally compact Abelian group with character group $Y$.
Let $\xi_1$, $\xi_2$, and $\xi_3$ be independent identically distributed 
random variables with values in the group $X$ and   
distribution  $\mu$.
Let $\alpha$ be a random variable with a Bernoulli distribution taking values
$0$ and $1$ with probability $\frac{1}{2}$.
Assume that $\alpha$ is independent with the random 
vector $(\xi_1, \xi_2, \xi_3)$.
Then the following statements are equivalent:
\renewcommand{\labelenumi}{\rm(\roman{enumi})}
\begin{enumerate}
  
\item	

the linear forms 
$2\xi_1$ and \ $\xi_1+\xi_2+2\alpha\xi_3$ 
are identically distributed; 

\item
the characteristic function $\hat\mu(y)$ satisfies the equation
\begin{equation}\label{26.14}
\hat\mu(2y)=\hat\mu^2(y)\frac{\hat\mu(2y)+1}{2}, \quad y\in Y.
 \end{equation}
\end{enumerate}
\end{lemma}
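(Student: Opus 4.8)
The proof is a direct translation of the distributional identity into the language of characteristic functions, using the independence assumptions and the explicit Bernoulli distribution of $\alpha$.

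The plan is as follows. First I would recall that for independent $X$-valued random variables, the characteristic function of a linear form factorizes: the characteristic function of $2\xi_1$ at a point $y\in Y$ equals $\mathbf{E}(2\xi_1,y)=\mathbf{E}(\xi_1,2y)=\hat\mu(2y)$, which gives the left-hand side of \eqref{26.14}. For the right-hand side, I would compute the characteristic function of $\xi_1+\xi_2+2\alpha\xi_3$ at $y$. Conditioning on the value of $\alpha$ and using that $\alpha$ is independent of $(\xi_1,\xi_2,\xi_3)$, I would write
\begin{equation*}
\mathbf{E}(\xi_1+\xi_2+2\alpha\xi_3,y)=\tfrac12\,\mathbf{E}(\xi_1+\xi_2,y)+\tfrac12\,\mathbf{E}(\xi_1+\xi_2+2\xi_3,y).
\end{equation*}
Since $\xi_1,\xi_2,\xi_3$ are independent and identically distributed with distribution $\mu$, the first term equals $\tfrac12\hat\mu^2(y)$ and the second equals $\tfrac12\hat\mu^2(y)\hat\mu(2y)$. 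Adding these gives $\hat\mu^2(y)\frac{\hat\mu(2y)+1}{2}$.

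Putting the two sides together, the linear forms $2\xi_1$ and $\xi_1+\xi_2+2\alpha\xi_3$ are identically distributed if and only if their characteristic functions coincide at every $y\in Y$, i.e. if and only if \eqref{26.14} holds. Here I would invoke the standard fact from abstract harmonic analysis (see \cite{HeRo1}) that a distribution on $X$ is uniquely determined by its characteristic function on $Y$, so equality of distributions is equivalent to equality of the corresponding functions on $Y$. This establishes the equivalence of (i) and (ii).

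There is no real obstacle here; the only points requiring a little care are the correct use of the adjoint relation $(2x,y)=(x,2y)$ when handling the coefficient $2$ in the linear forms, and making explicit the conditioning argument on $\alpha$ using its Bernoulli$(\tfrac12)$ distribution together with its independence from $(\xi_1,\xi_2,\xi_3)$. Everything else is a routine factorization of the expectation of a product of independent terms.
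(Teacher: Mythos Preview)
Your proposal is correct and follows essentially the same route as the paper: compute the characteristic functions of both linear forms using $(2x,y)=(x,2y)$, factorize via independence, and evaluate $\mathbf{E}[(2\alpha\xi_3,y)]$ from the Bernoulli$(\tfrac12)$ law of $\alpha$ to obtain $\tfrac12(1+\hat\mu(2y))$, then invoke uniqueness of the Fourier transform. The only cosmetic difference is that the paper writes the chain of equivalences compactly via $\mathbf{E}[(\alpha\xi_3,2y)]$ rather than explicitly conditioning on $\alpha\in\{0,1\}$, but the argument is identical.
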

\begin{proof}
The proof of the lemma is standard and does not differ from the 
proof for the real line. If $\xi$ is a random variable with values in
$X$ and distribution $\mu$, then $\hat\mu(y)={\bf E}[(\xi, y)]$. 
Taking into account that two random variables are identically distributed if and
only if their characteristic functions coincide, we have
\begin{multline*}
{\rm (i)}\Longleftrightarrow{\bf E}[(2\xi_1, y)]={\bf E}[(\xi_1+\xi_2+2\alpha\xi_3, y)]
\Longleftrightarrow {\bf E}[(\xi_1, 2y)]={\bf E}[(\xi_1, y)]{\bf E}[(\xi_2, y)]
{\bf E}[(2\alpha\xi_3, y)]
\\ \Longleftrightarrow {\bf E}[(\xi_1, 2y)]={\bf E}[(\xi_1, y)]{\bf E}[(\xi_2, y)]
{\bf E}[(\alpha\xi_3, 2y)]\Longleftrightarrow
\hat\mu(2y)=\hat\mu^2(y)\frac{\hat\mu(2y)+1}{2}, 
\quad y\in Y.
\end{multline*}
\end{proof}

The following statement follows from the proof of Theorem 2.1 
in \cite{Fe21}.

\begin{lemma}\label{l2}  Consider the group of dyadic 
rational numbers $\mathbb{Q}_2$, i.e.,
the subgroup of $\mathbb{Q}$ of the form
\begin{equation}\label{26.1}
\mathbb{Q}_2=\left\{\frac{m}{2^n}:   \ n = 0, 1,\dots; \ m
\in {\mathbb{Z}}\right\}.
 \end{equation}
Let $g(r)$ be a real-valued characteristic function on $\mathbb{Q}_2$ such that
$g(1)\ne 0$ and the inequalities
\begin{equation}\label{26.2}
0\le g(2r)\le g^2(r) 
\end{equation}
are fulfilled for all $|r|\le \frac{1}{2}$.
Then the function $g(r)$  is uniformly continuous on the group $\mathbb{Q}_2$ 
in the topology 
induced on $\mathbb{Q}_2$ by the topology of $\mathbb{R}$.
\end{lemma}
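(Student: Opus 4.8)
\emph{Plan.} Since $g=\hat\mu$ for a probability measure $\mu$, the standard estimate
$$|g(r_1)-g(r_2)|^2\le 2\bigl(1-g(r_1-r_2)\bigr),\qquad r_1,r_2\in\mathbb Q_2,$$
holds (factor out $(x,r_2)$ in $\int((x,r_1)-(x,r_2))\,d\mu$ and apply the Cauchy--Schwarz inequality; here $g$ is real, so $\operatorname{Re}g=g$). Consequently, uniform continuity of $g$ on $\mathbb Q_2$ in the topology induced from $\mathbb R$ is equivalent to continuity of $g$ at $0$ in that topology, i.e.\ to $g(r)\to 1=g(0)$ as $r\to 0$, $r\in\mathbb Q_2$. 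So this is all that must be proved.

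First I record elementary consequences of the hypothesis. If $|r|\le 1$ then $|r/2|\le\tfrac12$, so $g(r)=g\bigl(2\cdot\tfrac r2\bigr)\ge 0$; thus $g\ge 0$ on $\{r\in\mathbb Q_2:|r|\le 1\}$, and in particular $g(1)>0$. For $|r|\le 1$, the inequality $g(r)\le g^2(r/2)$ combined with $g(r/2)\ge 0$ gives $g(r/2)\ge g(r)^{1/2}$, hence by iteration $g(r/2^n)\ge g(r)^{1/2^n}$ for all $n$; taking $r=1$ yields $g(1/2^n)\to 1$. Finally, for $|r|\le\tfrac12$ one has $g(2r)\le g^2(r)\le g(r)$ (since $0\le g(r)\le 1$), so $g$ is non-increasing under doubling as long as the iterates stay in $[-\tfrac12,\tfrac12]$.

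I claim that continuity of $g$ at $0$ follows once one knows that $c_0:=\inf\{g(r):r\in\mathbb Q_2,\ \tfrac14<|r|\le\tfrac12\}>0$. Indeed, let $r\in\mathbb Q_2$ with $0<|r|\le 2^{-N}$, and let $L$ be the largest integer with $|2^{L}r|\le\tfrac12$; then $\tfrac14<|2^{L}r|\le\tfrac12$ and $2^{L}|r|>\tfrac14$, whence $L\ge N-1$. Applying $g(s/2^{j})\ge g(s)^{1/2^{j}}$ (valid for $|s|\le 1$) with $s=2^{L}r$ and $j=L$ gives $g(r)\ge g(2^{L}r)^{1/2^{L}}\ge c_0^{1/2^{N-1}}$, so $1-g(r)\le 1-c_0^{1/2^{N-1}}\to 0$ as $N\to\infty$, uniformly in $r$. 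Thus everything reduces to proving $c_0>0$.

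The proof of $c_0>0$ is where positive-definiteness of $g$ must be used in an essential way; the inequalities alone are too weak. For each $n$ the restriction of $g$ to the subgroup $2^{-n}\mathbb Z\cong\mathbb Z$ is a characteristic function on $\mathbb Z$, so by Bochner's theorem $g(m/2^{n})=\hat\rho_n(m)=\int_{\mathbb T}e^{2\pi i m\theta}\,d\rho_n(\theta)$ for a symmetric probability measure $\rho_n$ on $\mathbb T$, and compatibility forces $\rho_{n-1}$ to be the image of $\rho_n$ under the doubling map of $\mathbb T$. The hypothesis, written for $r=m/2^n$ with $|m|\le 2^{n-1}$, reads $0\le\hat\rho_{n-1}(m)=\hat\rho_n(2m)\le\hat\rho_n(m)^2\le\hat\rho_n(m)$, so for each fixed $m$ the numbers $\hat\rho_n(m)$ are eventually non-decreasing in $n$; their limit $G(m)$ is a positive-definite function on $\mathbb Z$ with $G(0)=1$ and, by the previous step, $G(1)=\lim_n g(1/2^n)=1$. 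A positive-definite $G$ on $\mathbb Z$ with $G(0)=G(1)=1$ is identically $1$ (writing $G=\hat\tau$ one gets $\int(1-\cos 2\pi\theta)\,d\tau=0$, so $\tau=E_0$), hence $\hat\rho_n(m)\to 1$ for every $m$, i.e.\ $\rho_n\to E_0$ weakly. Now if $c_0=0$ there are $s_j=m_j/2^{n_j}$ in lowest terms, with $2^{n_j-2}<|m_j|<2^{n_j-1}$ and $g(s_j)=\hat\rho_{n_j}(m_j)\to 0$; one treats separately the case where $(n_j)$ is bounded — which forces $g$ to vanish at some fixed point of $(\tfrac14,\tfrac12]$, to be excluded by combining the monotonicity of $\hat\rho_n(m^{*})$ with $g(1)\ne0$ — and the case $n_j\to\infty$, where one must reconcile the weak convergence $\rho_{n_j}\to E_0$ with $\hat\rho_{n_j}(m_j)$ being small while $|m_j|\asymp 2^{n_j}$, using the quantitative bounds above to limit how fast $\rho_n$ may concentrate near $0$. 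I expect this last step — proving $c_0>0$, and especially excluding a zero of $g$ on $(\tfrac14,\tfrac12]$ — to be the main obstacle, precisely because it is the only place where one must exploit the hypothesis $0\le g(2r)\le g^2(r)$ for \emph{all} $|r|\le\tfrac12$, and not merely along the sequence $r=2^{-n}$.
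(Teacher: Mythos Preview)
Your reductions are sound: the standard estimate $|g(r_1)-g(r_2)|^2\le 2\bigl(1-g(r_1-r_2)\bigr)$ does reduce uniform continuity to continuity at $0$, and your observation that $g(r)\ge c_0^{1/2^L}$ (with $L$ the largest integer for which $|2^Lr|\le\tfrac12$) correctly reduces continuity at $0$ to the claim $c_0:=\inf\{g(r):r\in\mathbb Q_2,\ \tfrac14<|r|\le\tfrac12\}>0$. The arguments that $g\ge0$ on $[-1,1]\cap\mathbb Q_2$, that $g(1/2^n)\to1$, and that $\rho_n\to E_0$ weakly are all correct.

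The proof is not complete, however, because the assertion $c_0>0$ is not established --- you yourself flag it as ``the main obstacle'' and give only an outline. Neither branch of that outline is convincing as written. In the bounded-$n_j$ case you say a zero of $g$ at some $s^*=m^*/2^{n^*}\in(\tfrac14,\tfrac12]$ is ``excluded by combining the monotonicity of $\hat\rho_n(m^*)$ with $g(1)\ne0$''; but that monotonicity only tells you $g(m^*/2^n)$ is nondecreasing for $n\ge n^*$, which does not preclude $g(m^*/2^{n^*})=0$, and the link to $g(1)$ is never made. (For instance, $g(3/8)=0$ forces $g(3/4)=0$, and then inequalities such as $g(1/4)^2\le 2(1-g(1/8))$ together with $g(1/2^n)\ge g(1)^{1/2^n}$ give an \emph{upper} bound on $g(1)$, not $g(1)=0$.) In the unbounded case you appeal to ``reconciling'' $\rho_{n_j}\to E_0$ with $\hat\rho_{n_j}(m_j)\to0$ when $|m_j|\asymp 2^{n_j}$; but weak convergence controls $\hat\rho_n(m)$ only for \emph{fixed} $m$ and says nothing about Fourier coefficients at frequencies tending to infinity with $n$. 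The ``quantitative bounds above'' you invoke --- essentially $g(1/2^n)\ge g(1)^{1/2^n}$ --- yield concentration rates for $\rho_n$ that are far too slow to control such high frequencies.

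So what you have is a correct reduction followed by an unfinished argument at precisely the decisive step. The paper itself does not give a self-contained proof of this lemma either --- it refers to the proof of Theorem~2.1 in \cite{Fe21} --- so a direct comparison of routes is not possible here. To finish, you must either carry the $c_0>0$ argument through (which will require using the hypothesis $g(2r)\le g^2(r)$ for \emph{all} $|r|\le\tfrac12$ in combination with positive-definiteness in a sharper way than the inequalities you have exploited so far), or find a different passage from the dyadic information $g(1/2^n)\to1$ to full continuity at $0$.
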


The proof of the following statement is standard, see, e.g., \cite[Lemma 7.2]{Fe5}.

\begin{lemma}\label{l03.1} 
Let  $X$ be a locally compact Abelian group 
with character group $Y$. Let $K$ be a compact subgroup of $X$.
Then the following statements are equivalent:
\renewcommand{\labelenumi}{\rm(\roman{enumi})}
\begin{enumerate}
  
\item	

$K$ is a Corwin group\footnote{It should be noted that Corwin groups 
first appeared in the study of group 
analogues of the famous Kac--Bernstein theorem for characterizing 
the Gaussian distribution (\!\!\cite{HR}).}; 

\item
if $2y\in A(Y, K)$ then $y\in A(Y, K)$.
\end{enumerate}
\end{lemma}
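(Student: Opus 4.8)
The plan is to prove the equivalence in Lemma~\ref{l03.1} by unwinding the definitions in both directions, using the duality between a compact subgroup $K$ of $X$ and the discrete factor group $Y/A(Y,K)$, together with the basic fact that the character group of $K$ is topologically isomorphic to $Y/A(Y,K)$ (see \cite[(23.25), (24.11)]{HeRo1}). The key observation is that $K$ is a Corwin group, i.e. $K^{(2)}=K$, if and only if the character group of $K$ is $2$-torsion-free, i.e. $2z=0$ in $Y/A(Y,K)$ implies $z=0$; this is precisely the statement that divisibility of a compact group by $2$ dualizes to torsion-freeness (by $2$) of its discrete dual. Translating $2z=0$ in $Y/A(Y,K)$ back to $Y$ gives exactly condition (ii): $2y\in A(Y,K)\Rightarrow y\in A(Y,K)$.

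First I would record the general duality fact: for a continuous endomorphism of an LCA group, surjectivity is dual to injectivity of the adjoint endomorphism, applied here to the map $x\mapsto 2x$ on the compact group $K$; its adjoint is the map $z\mapsto 2z$ on the (discrete) dual of $K$. Thus $K^{(2)}=K$ (the map $x\mapsto 2x$ on $K$ is onto, which for compact $K$ is equivalent to having dense range, hence onto since the image of a compact set is compact) if and only if the map $z \mapsto 2z$ on the dual of $K$ is injective. Alternatively, and perhaps more self-containedly, I would argue directly: a closed subgroup $H\le K$ equals $K$ iff $A(\widehat K, H)=\{0\}$, apply this with $H=K^{(2)}$, and identify $A(\widehat K, K^{(2)})$ with the kernel of multiplication by $2$ on $\widehat K$, since a character of $K$ annihilates $K^{(2)}$ exactly when its square is trivial.

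Then I would port this through the identification $\widehat K \cong Y/A(Y,K)$: a character $y+A(Y,K)$ of $K$ satisfies $2\bigl(y+A(Y,K)\bigr)=0$ in $Y/A(Y,K)$ precisely when $2y\in A(Y,K)$, and it is the zero character of $K$ precisely when $y\in A(Y,K)$. Hence injectivity of multiplication by $2$ on $\widehat K$ is exactly the implication ``$2y\in A(Y,K)\Rightarrow y\in A(Y,K)$'', which is (ii). Assembling these equivalences yields (i)$\Leftrightarrow$(ii).

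The only mild obstacle is the compactness input: the equivalence ``$x\mapsto 2x$ onto $\Leftrightarrow$ its adjoint injective'' in full generality needs the range to be closed, which is automatic here because $K$ is compact so $2K$ is compact, hence closed. I would make sure to invoke compactness of $K$ at exactly that point, and otherwise the argument is a routine application of Pontryagin duality and the properties of annihilators; since the statement is noted in the excerpt to be standard (cf.\ \cite[Lemma 7.2]{Fe5}), I would keep the write-up short and cite \cite{HeRo1} for the annihilator/duality facts rather than reproving them.
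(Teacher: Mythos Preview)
Your argument is correct: the equivalence reduces cleanly to the duality between surjectivity of $x\mapsto 2x$ on the compact group $K$ and injectivity of $z\mapsto 2z$ on its discrete dual $\widehat K\cong Y/A(Y,K)$, and you correctly flag that compactness of $K$ is what guarantees $2K$ is closed so that ``dense range'' coincides with ``onto''. The translation of ``$2z=0$ in $Y/A(Y,K)$'' into ``$2y\in A(Y,K)$'' is exactly right, and the alternative route via $A(\widehat K,K^{(2)})=\{z\in\widehat K:2z=0\}$ is equally valid.

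As for comparison with the paper: there is nothing to compare, since the paper does not supply a proof of this lemma at all---it simply declares the statement standard and refers to \cite[Lemma~7.2]{Fe5}. Your write-up therefore provides precisely the justification the paper omits, and is in the spirit of the cited reference.
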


\begin{theorem}\label{th1} Let an ${\bm a}$-adic solenoid 
$\Sigma_{\bm a}$ contain  no elements of order $2$.
Let $\xi_1$, $\xi_2$, and $\xi_3$ be independent identically distributed 
random variables with values in $\Sigma_{\bm a}$ and a symmetric  
distribution  $\mu$. 
Let $\alpha$ be a random variable with a Bernoulli distribution taking values
$0$ and $1$ with probability $\frac{1}{2}$.
Assume that $\alpha$ is independent with the random vector $(\xi_1, \xi_2, \xi_3)$.
Then the following statements are equivalent:
\renewcommand{\labelenumi}{\rm(\roman{enumi})}
\begin{enumerate}
  
\item	

The linear forms $2\xi_1$ and $\xi_1+\xi_2+2\alpha\xi_3$ 
are identically distributed; 

\item
there is a continuous monomorphism 
$\tau:\mathbb{R}\rightarrow\Sigma_{\bm a}$, a hyperbolic secant distribution $M$
on $\mathbb{R}$, 
and a compact Corwin subgroup
$K$ of $\Sigma_{\bm a}$ such that $\mu=\tau(M)*m_K$.
\end{enumerate}
\end{theorem}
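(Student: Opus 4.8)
The plan is to prove the two implications separately; the substantial part is (i)$\Rightarrow$(ii). For (ii)$\Rightarrow$(i), assume $\mu=\tau(M)*m_K$, so $\hat\mu(y)=\hat M(\widetilde\tau y)\,\widehat{m_K}(y)$ for $y\in Y$. Since $M$ is a hyperbolic secant distribution, $\hat M(s)=2/(e^{\sigma s}+e^{-\sigma s})$, and a direct computation using $\cosh 2x=2\cosh^2 x-1$ shows $\hat M(2s)=\hat M^2(s)\frac{\hat M(2s)+1}{2}$ for all $s\in\mathbb{R}$ (and trivially so when $M=E_0$). If $y\in A(Y,K)$, then $2y\in A(Y,K)$, and \eqref{26.14} for $\hat\mu$ at $y$ reduces to this identity for $\hat M$ at $s=\widetilde\tau y$. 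If $y\notin A(Y,K)$, then $\hat\mu(y)=0$, and since $K$ is a Corwin group, Lemma~\ref{l03.1} gives $2y\notin A(Y,K)$, so $\hat\mu(2y)=0$ and \eqref{26.14} holds trivially. It remains to invoke Lemma~\ref{l1}.

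For (i)$\Rightarrow$(ii), Lemma~\ref{l1} gives that $\hat\mu$ satisfies \eqref{26.14}, which we rewrite as $\hat\mu(2y)\bigl(2-\hat\mu^2(y)\bigr)=\hat\mu^2(y)$. Since $\mu$ is symmetric, $\hat\mu$ is real, $0\le\hat\mu^2(y)\le1$, and hence
\begin{equation*}
0\le\hat\mu(2y)=\frac{\hat\mu^2(y)}{2-\hat\mu^2(y)}\le\hat\mu^2(y),\qquad y\in Y .
\end{equation*}
The assumption that $\Sigma_{\bm a}$ has no element of order $2$ means that multiplication by $2$ is a topological automorphism of $\Sigma_{\bm a}$; passing to character groups, multiplication by $2$ is an automorphism of $Y=H_{\bm a}$, so $H_{\bm a}$ is $2$-divisible and contains $\mathbb{Q}_2$. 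In particular every $y\in Y$ has the form $2y'$, so $\hat\mu\ge0$ on $Y$. Moreover, for any $y$ with $\hat\mu(y)>0$ the sequence $\hat\mu(y/2^n)$ is nondecreasing and bounded by $1$, and its limit $c$ satisfies $c=c^2/(2-c^2)$, whence $c=1$. Using the standard inequality $\bigl|\hat\mu(a+b)-\hat\mu(a)\hat\mu(b)\bigr|\le\bigl(1-\hat\mu^2(a)\bigr)^{1/2}\bigl(1-\hat\mu^2(b)\bigr)^{1/2}$ for characteristic functions with $a=y_1/2^n$, $b=y_2/2^n$, we obtain that $N:=\{y\in Y:\hat\mu(y)\ne0\}$ is a subgroup of $Y$: if $y_1,y_2\in N$ but $y_1+y_2\notin N$, then $\hat\mu\bigl((y_1+y_2)/2^n\bigr)=0$ for all $n$, contradicting that the right-hand side of the inequality tends to $1$.

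Now let $K=A(X,N)$ be the annihilator of $N$ in $X$. Then $K$ is compact, $A(Y,K)=N$, and since $2y\in N$ iff $y\in N$, Lemma~\ref{l03.1} shows $K$ is a Corwin group. Because $\hat\mu$ vanishes off $N$ while $\widehat{m_K}=\mathbf 1_{N}$, we get $\mu*m_K=\mu$. If $N=\{0\}$, then $K=X$ and $\mu=m_X$, so (ii) holds with $M=E_0$, $K=\Sigma_{\bm a}$, and any continuous monomorphism $\tau\colon\mathbb{R}\to\Sigma_{\bm a}$. Otherwise let $\pi\colon X\to X/K$ be the natural map and $\bar\mu=\pi(\mu)$; then $X/K$ is a compact connected group of dimension $1$ with character group the $2$-divisible group $N\subseteq\mathbb{Q}$, and $\hat{\bar\mu}=\hat\mu|_N$ is nowhere zero on $N$ and again satisfies \eqref{26.14}. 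Arguing as in the proof of Lemma~\ref{l2} (that is, as in the proof of \cite[Theorem~2.1]{Fe21}), $\hat{\bar\mu}$ is uniformly continuous on $N$ in the topology induced from $\mathbb{R}$; since $N$ is dense in $\mathbb{R}$, it extends to a continuous positive definite function $\hat{M_0}$ on $\mathbb{R}$, where $M_0$ is a symmetric distribution on $\mathbb{R}$. Passing to the limit in \eqref{26.14} shows that $\hat{M_0}$ satisfies \eqref{26.14} on $\mathbb{R}$, so by Theorem~A together with Lemma~\ref{l1} (for $X=\mathbb{R}$) the distribution $M_0$ is a hyperbolic secant distribution.

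Finally, the inclusion $N\hookrightarrow\mathbb{R}$ is the adjoint of a continuous monomorphism $\tau_0\colon\mathbb{R}\to X/K$ with $\bar\mu=\tau_0(M_0)$, and $\tau_0$ lifts through $\pi$ to a continuous monomorphism $\tau\colon\mathbb{R}\to\Sigma_{\bm a}$ (the one-parameter subgroups of $\Sigma_{\bm a}$ project onto those of $X/K$; equivalently, $\operatorname{Ext}(\mathbb{R},K)=0$ for compact $K$). For this $\tau$ the function $\hat{M_0}(\widetilde\tau y)\,\widehat{m_K}(y)$ equals $\hat\mu(y)$ for $y\in N$ and vanishes off $N$, hence coincides with $\hat\mu$, so $\mu=\tau(M_0)*m_K$, as required. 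The delicate points are the proof that $N$ is a subgroup (the doubling-limit argument above) and, above all, the uniform continuity of $\hat{\bar\mu}$ on $N$ in the $\mathbb{R}$-topology: this is where the structure of $\bm a$-adic solenoids genuinely enters, and it is modeled on Lemma~\ref{l2}.
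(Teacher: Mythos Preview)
Your argument is essentially correct and follows the same overall strategy as the paper (reduce to \eqref{26.14}, exploit $0\le\hat\mu(2y)\le\hat\mu^2(y)$, invoke Lemma~\ref{l2} and Theorem~A, identify $K$ via Lemma~\ref{l03.1}), but you change the order of operations: you first prove that $N=\{y:\hat\mu(y)\ne0\}$ is a subgroup (using the covariance-type inequality and the fact that $\hat\mu(y/2^n)\to1$ directly from the equation), then pass to the quotient $X/K$, and only afterwards appeal to Lemma~\ref{l2} and Theorem~A. The paper does it the other way round: for each $y_0$ with $f(y_0)\ne0$ it applies Lemma~\ref{l2} on the single orbit $\mathbb{Q}_2y_0$, extends to $\mathbb{R}$, uses Theorem~A to get $f(ry_0)=2/(e^{\sigma(y_0)r}+e^{-\sigma(y_0)r})$, then checks that $\sigma(y_0)$ is independent of $y_0$ (any two orbits $\mathbb{Q}_2y_0$, $\mathbb{Q}_2y_1$ meet nontrivially in $H_{\bm a}\subset\mathbb{Q}$), obtains the global formula \eqref{26.10} on $B$, and only then proves $B$ is a subgroup using \eqref{26.9}. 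Your subgroup argument is arguably cleaner because it does not need the explicit formula first.

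There are two soft spots worth tightening. First, Lemma~\ref{l2} as stated concerns $\mathbb{Q}_2$, not a general $2$-divisible subgroup $N\subset\mathbb{Q}$; your sentence ``arguing as in the proof of Lemma~\ref{l2}, $\hat{\bar\mu}$ is uniformly continuous on $N$'' skips exactly the step the paper spells out---namely, working orbit-by-orbit and then checking consistency of the parameter $\sigma$ across orbits. Without that (or without actually verifying that the proof of Lemma~\ref{l2} goes through for $N$), uniform continuity on all of $N$ is not yet justified. Second, the lifting through $\pi$ via $\operatorname{Ext}(\mathbb{R},K)=0$ is unnecessary machinery: the paper simply takes $\tau$ to be the adjoint of the inclusion $H_{\bm a}\hookrightarrow\mathbb{R}$, which is automatically a continuous monomorphism $\mathbb{R}\to\Sigma_{\bm a}$ with $\widehat{\tau(M)}(y)=\hat M(y)$ for all $y\in H_{\bm a}$; this already restricts correctly to $N$, so the quotient-and-lift detour can be dropped entirely.
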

\begin{proof}
In order not to introduce new notation, we   identify $H_{\bm a}$
with the character group of the group $\Sigma_{\bm a}$.
Since $\Sigma_{\bm a}$ is a connected group, multiplication by 2 is an epimorphism.
Since $\Sigma_{\bm a}$ contains no elements of order
2, multiplication by 2 is a monomorphism. This implies that
multiplication by 2 is a topological automorphism of 
$\Sigma_{\bm a}$\footnote{Let ${\bm a}=(a_0, a_1,a_2,\dots)$, where  $a_j$ are natural
and  $a_j > 1$. Consider the ${\bm a}$-adic solenoid
 $\Sigma_{\bm a}$. The multiplication by 2 is a topological automorphism of 
the group $\Sigma_{\bm a}$ if and only if the set 
$\{a_0, a_1,a_2,\dots\}$ contains infinitely many even numbers.}. 
Hence  multiplication by 2 
is an automorphism of 
$H_{\bm a}$. It means that both 
$\Sigma_{\bm a}$ and $H_{\bm a}$ are
groups with unique division by 2. 
We use the proof scheme of Theorem 2.1 in \cite{Fe21}, where 
the generalized P\'olya theorem
was proved for ${\bm a}$-adic solenoids $\Sigma_{\bm a}$ provided that
there is a unique prime number $p$ such that 
$\Sigma_{\bm a}$ contains no elements of order $p$.

Assume that (i) is fulfilled. Put $f(y)=\hat\mu(y)$. By Lemma \ref{l1}, 
we have
\begin{equation}\label{26.3}
f(2y)=f^2(y)\frac{f(2y)+1}{2}, \quad y\in H_{\bm a}.
\end{equation}
Since $\mu$ is a symmetric distribution,  the function $f(y)$ is real-valued.
From equation (\ref{26.3}) it follows  that the inequalities
\begin{equation}\label{26.4}
0\le f(2y)\le f^2(y), \quad y\in H_{\bm a},
\end{equation}
are valid.
Suppose that $\mu\ne m_{\Sigma_{\bm a}}$. In view of (\ref{26.13}), there is an element
$y_0\in H_{\bm a}$, $y_0\ne 0$,   such that $f(y_0)\ne 0$.   Let $\mathbb{Q}_2$ be 
the group of dyadic rational numbers, i.e., $\mathbb{Q}_2$ is
of the form
(\ref{26.1}).
 Since $H_{\bm a}$ is a group  with unique division by $2$, 
we have $ry_0\in H_{\bm a}$ for all  $r\in \mathbb{Q}_2$. Consider
on
the group $\mathbb{Q}_2$ the function 
$$g(r)=f(ry_0), \quad r\in \mathbb{Q}_2.$$   
From
(\ref{26.4}) it follows  that  inequalities (\ref{26.2}) for 
the function $g(r)$ are fulfilled for all $r\in\mathbb{Q}_2$.
By Lemma
\ref{l2}, the function
$g(r)$ is uniformly continuous on the group $\mathbb{Q}_2$ 
in the topology induced on $\mathbb{Q}_2$ by the topology 
of  $\mathbb{R}$.
In view of (\ref{26.3}), the function  $g(r)$ satisfies the equation
 \begin{equation}\label{26.5}
g(2r)=g^2(r)\frac{g(2r)+1}{2}, \quad r\in \mathbb{Q}_2.
\end{equation}
Since the function
$g(r)$ is uniformly continuous on $\mathbb{Q}_2$ 
in the topology induced on $\mathbb{Q}_2$ by the topology 
of  $\mathbb{R}$, we extend the function $g(r)$ by continuity 
from $\mathbb{Q}_2$ to a continuous
positive definite function on $\mathbb{R}$. We keep the notation $g$
for the extended function. From (\ref{26.5}) it follows  that the extended function 
$g(s)$    satisfies
the equation
\begin{equation}\label{26.6}
g(2s)=g^2(s)\frac{g(2s)+1}{2}, \quad s\in \mathbb{R}.
\end{equation}
Inasmuch as the function $f(y)$ is real-valued, the function $g(s)$ is also real-valued.
Taking into account Lemma \ref{l1} and Theorem A,  
from  (\ref{26.6}) it follows that $g(s)$  is the  characteristic 
function of a hyperbolic 
 secant distribution on the real line. Hence we get the representation
\begin{equation*}\label{26.7}
f(y)=\frac{2}{e^{\sigma(y_0)y}+e^{-\sigma(y_0)y}},\quad y=ry_0, \ r\in \mathbb{Q}_2,
\end{equation*}
for some real $\sigma(y_0)$. It is not difficult to verify that $\sigma(y_0)$
does not depend on $y_0$. Put 
$$\sigma=\sigma(y_0)$$ and
  $$B=\{y\in H_{\bm a}: f(y)\ne 0\}.$$ Thus, we have the representation
\begin{equation}\label{26.10}
f(y)=
\begin{cases}
\frac{2}{e^{\sigma y}+e^{-\sigma y}} & \text{\ if\ }\   y\in B,
\\  0  & \text{\ if\ }\ y\not\in
B.
\end{cases}
\end{equation}
 
Let us verify that the set $B$ is a subgroup of $H_{\bm a}$. 
From equation (\ref{26.3}) it follows  
 that if   $y\in B$, then $2^{-n}y\in B$
for any natural $n$.
Take $y_1, y_2\in B$. We find from (\ref{26.10})
that   $f(2^{-n} y_j)\rightarrow 1$, $j=1, 2$,  as
$n\rightarrow\infty$. 
Let us note that the inequality
 \begin{equation}\label{26.9}
1-f(y_1+y_2)\le 2[(1-f(y_1))+(1-f(y_2))], \quad y_1, y_2\in Y,
 \end{equation}
is fulfilled for an arbitrary real-valued characteristic
function $f(y)$ on a locally compact Abelian group $Y$.
Since $f(2^{-n} y_j)\rightarrow 1$, $j=1, 2$,  as
$n\rightarrow\infty$, from (\ref{26.9}) it follows that
$2^{-n} (y_1+y_2)\in B$ for   sufficiently large  
numbers $n$.  This implies that  $y_1+y_2\in B$.

Let $M$ be the hyperbolic secant distribution 
on the group $\mathbb{R}$
with the characteristic function  
\begin{equation}\label{26.15}
\widehat M(s)=\frac{2}{e^{\sigma s}+e^{-\sigma s}}, \quad s\in \mathbb{R}.
\end{equation}
Denote by $\pi:H_{\bm a}\rightarrow \mathbb{R}$ the natural homomorphism
 $\pi(r)=r$. Consider the adjoint homomorphism  
 $\tau=\tilde \pi$, $\tau:\mathbb{R}\rightarrow \Sigma_{\bm a}$.
 Since $\pi(H_{\bm a})$ is dense in $\mathbb{R}$, the homomorphism 
$\tau$  is a monomorphism. In view of (\ref{26.16}), the characteristic function 
 of the distribution  $\tau(M)$ is of the form
\begin{equation}\label{26.16}
\widehat {\tau(M)}(y)=\frac{2}{e^{\sigma y}+e^{-\sigma y}}, 
\quad y\in H_{\bm a}.
\end{equation}
Put $K=A(\Sigma_{\bm a}, B)$. Then $K$ is a compact subgroup
of $\Sigma_{\bm a}$. Since $B=A(H_{\bm a}, K)$, from 
(\ref{26.13}) we obtain  that
\begin{equation}\label{26.17}
\widehat m_K(y)=
\begin{cases}
1  & \text{\ if\ }\   y\in B,
\\  0  & \text{\ if\ }\ y\not\in
B.
\end{cases}
\end{equation}
We find from (\ref{26.10}),  (\ref{26.16}), and (\ref{26.17}) 
that $\hat\mu(y)=\widehat{\tau(M)}(y)\widehat m_K(y)$.
Hence $\mu=\tau(M)*m_K$.
It follows from equation (\ref{26.3}) that the subgroup $B$ has the property: 
if $2y\in B$, then $y\in B$. 
By Lemma \ref{l03.1}, this implies that $K$ is a Corwin group. 
Thus, we proved (ii).

Assume that (ii) is fulfilled. Put $\pi=\tilde\tau$, 
$\pi:H_{\bm a}\rightarrow\mathbb{R}$. Then $\pi$ is of the form
$\pi(r)=ar$ for some nonzero real number $a$. Since
$M$ is a hyperbolic secant distribution,
(\ref{26.15}) is fulfilled for some real $\sigma$. 
In view of (\ref{26.13}), we have
\begin{equation}\label{12.1}
\hat\mu(y)=
\begin{cases}
\frac{2}{e^{\sigma ay}+e^{-\sigma ay}} & \text{\ if\ }\   y\in A(H_{\bm a}, K),
\\  0  & \text{\ if\ }\ y\not\in
A(H_{\bm a}, K).
\end{cases}
\end{equation}
 By Lemma \ref{l1}, 
to prove (i) it suffices to verify that the characteristic function
$f(y)=\hat\mu(y)$ satisfies  equation (\ref{26.3}). Take $y\in A(H_{\bm a}, K)$.
Then $2y\in A(H_{\bm a}, K)$ and (\ref{12.1}) implies that (\ref{26.3}) is fulfilled.
Take $y\notin A(H_{\bm a}, K)$. Since $K$ is a Corwin group, it follows from 
 Lemma \ref{l03.1} that 
 $2y\notin A(H_{\bm a}, K)$. In this case both sides of (\ref{26.3}) are equal to 
the zero. The theorem is completely proved. 

Note that in the proof that (ii)$\Longrightarrow$(i) we did not use the fact
that the ${\bm a}$-adic solenoid 
$\Sigma_{\bm a}$ contains  no elements of order $2$.\end{proof}

Taking into account (\ref{26.13}), Theorem \ref{th1} implies the following statement.

\begin{corollary}\label{c1} Let an ${\bm a}$-adic solenoid 
$\Sigma_{\bm a}$ contain  no elements of order $2$.
Let $\xi_1$, $\xi_2$, and $\xi_3$ be independent identically distributed 
random variables with values in $\Sigma_{\bm a}$ and a symmetric distribution  $\mu$ 
with nonvanishing 
characteristic function. Let $\alpha$ be a random variable with a Bernoulli 
distribution taking values
$0$ and $1$ with probability $\frac{1}{2}$.
Assume that $\alpha$ is independent with the random vector $(\xi_1, \xi_2, \xi_3)$.
Then the following statements are equivalent:
\renewcommand{\labelenumi}{\rm(\roman{enumi})}
\begin{enumerate}
  
\item	

the linear forms $2\xi_1$ and $\xi_1+\xi_2+2\alpha\xi_3$ 
are identically distributed; 

\item
there is a continuous monomorphism 
$\tau:\mathbb{R}\rightarrow\Sigma_{\bm a}$ and a hyperbolic secant distribution $M$
on $\mathbb{R}$ such that $\mu=\tau(M)$.
\end{enumerate}
\end{corollary}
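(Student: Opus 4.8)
The plan is to obtain Corollary \ref{c1} as a direct specialization of Theorem \ref{th1}, the only new ingredient being the hypothesis that $\hat\mu$ does not vanish. I would first treat the implication (i)$\Longrightarrow$(ii). Assuming (i), Theorem \ref{th1} provides a continuous monomorphism $\tau:\mathbb{R}\rightarrow\Sigma_{\bm a}$, a hyperbolic secant distribution $M$ on $\mathbb{R}$, and a compact Corwin subgroup $K$ of $\Sigma_{\bm a}$ with $\mu=\tau(M)*m_K$. Passing to characteristic functions, $\hat\mu(y)=\widehat{\tau(M)}(y)\,\widehat{m_K}(y)$ for all $y\in H_{\bm a}$, where by (\ref{26.16}) we have $\widehat{\tau(M)}(y)=\dfrac{2}{e^{\sigma y}+e^{-\sigma y}}$, which is strictly positive for every $y\in H_{\bm a}$. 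Hence the zero set of $\hat\mu$ coincides with $\{y\in H_{\bm a}:\widehat{m_K}(y)=0\}$, which by (\ref{26.13}) equals $H_{\bm a}\setminus A(H_{\bm a},K)$. Since $\hat\mu$ is nonvanishing by assumption, this set is empty, so $A(H_{\bm a},K)=H_{\bm a}$, which forces $K=\{0\}$. Then $m_K=E_0$ and $\mu=\tau(M)$, giving (ii).

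For the converse, I would observe that the trivial subgroup $\{0\}$ is a compact Corwin subgroup of $\Sigma_{\bm a}$, since it obviously satisfies $\{0\}^{(2)}=\{0\}$, and that $m_{\{0\}}=E_0$. Therefore, if (ii) holds for some $\tau$ and $M$, then $\mu=\tau(M)=\tau(M)*m_{\{0\}}$, and Theorem \ref{th1} applied with $K=\{0\}$ yields that the linear forms $2\xi_1$ and $\xi_1+\xi_2+2\alpha\xi_3$ are identically distributed, i.e. (i). One also notes that $\hat\mu=\widehat{\tau(M)}$ is automatically nonvanishing in this case, so the standing hypothesis of the corollary is consistent.

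I do not expect any serious obstacle here, as the corollary is a routine reduction. The only point requiring (minimal) care is the remark that the characteristic function of a hyperbolic secant distribution, transported to $H_{\bm a}$ via $\widehat{\tau(M)}$, is everywhere strictly positive; this is precisely what allows one to conclude that the Haar factor $m_K$ in the representation furnished by Theorem \ref{th1} must be degenerate once $\hat\mu$ is assumed nonvanishing.
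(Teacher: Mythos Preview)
Your argument is correct and is exactly the approach the paper takes: it simply states that, taking into account (\ref{26.13}), Theorem~\ref{th1} implies Corollary~\ref{c1}, which is precisely your observation that the nonvanishing of $\hat\mu$ forces $A(H_{\bm a},K)=H_{\bm a}$ and hence $K=\{0\}$. Your write-up just spells out the details the paper leaves implicit.
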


Note also that the following statement is true. 
\begin{proposition}\label{c2} Let  $X$ be a locally compact Abelian group and 
let $K$ be a compact subgroup of $X$.
Let $\xi_1$, $\xi_2$, and $\xi_3$ be independent identically distributed 
random variables with values in $X$ and    
distribution  $m_K$. 
Let $\alpha$ be a random variable with a Bernoulli distribution taking values
$0$ and $1$ with probability $\frac{1}{2}$.
Assume that $\alpha$ is independent with the random vector $(\xi_1, \xi_2, \xi_3)$.
Then the following statements are equivalent:
\renewcommand{\labelenumi}{\rm(\roman{enumi})}
\begin{enumerate}
  
\item	

the linear forms $2\xi_1$ and $\xi_1+\xi_2+2\alpha\xi_3$ 
are identically distributed; 

\item
$K$ is a Corwin group.
\end{enumerate}
\end{proposition}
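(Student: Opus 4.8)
The plan is to reduce the statement to Lemma~\ref{l1} and then to read the Corwin condition directly off the explicit form (\ref{26.13}) of $\widehat m_K$. First I would apply Lemma~\ref{l1} with $\mu=m_K$: this shows that statement (i) is equivalent to the assertion that $f(y)=\widehat m_K(y)$ satisfies the functional equation (\ref{26.14}), i.e. $f(2y)=f^2(y)(f(2y)+1)/2$ for all $y\in Y$. Since $A(Y,K)$ is a subgroup of $Y$, formula (\ref{26.13}) tells us that $f$ takes only the values $0$ and $1$, so it suffices to test (\ref{26.14}) in the two cases $y\in A(Y,K)$ and $y\notin A(Y,K)$.

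If $y\in A(Y,K)$, then $2y\in A(Y,K)$ as well, so both sides of (\ref{26.14}) equal $1$ and the equation holds automatically. If $y\notin A(Y,K)$, then $f(y)=0$, so the right-hand side of (\ref{26.14}) vanishes and the equation reduces to $f(2y)=0$, that is, $2y\notin A(Y,K)$. Hence (\ref{26.14}) holds for every $y\in Y$ if and only if the implication ``$2y\in A(Y,K)\Rightarrow y\in A(Y,K)$'' holds for every $y\in Y$, which is just the contrapositive of the condition extracted from the second case.

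To finish, I would invoke Lemma~\ref{l03.1}, by which this last implication is precisely the statement that $K$ is a Corwin group. Chaining the three equivalences (Lemma~\ref{l1}, the case analysis, Lemma~\ref{l03.1}) yields (i)$\Longleftrightarrow$(ii). I do not expect any real obstacle: the proof is a direct substitution into (\ref{26.13}) and (\ref{26.14}), and the only point meriting a line of care is to match the direction of the implication obtained in the second case with condition (ii) of Lemma~\ref{l03.1}.
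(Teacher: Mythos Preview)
Your proposal is correct and follows essentially the same approach as the paper: reduce to Lemma~\ref{l1}, plug in the explicit form (\ref{26.13}) of $\widehat m_K$, and use Lemma~\ref{l03.1} to identify the resulting condition with $K$ being Corwin. The only cosmetic difference is that the paper presents (i)$\Rightarrow$(ii) and (ii)$\Rightarrow$(i) separately, whereas you package the same case analysis as a single chain of equivalences.
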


\begin{proof}
Denote by $Y$ the character group of the group $X$. 
Put $f(y)=\widehat m_K(y)$.
Assume that (i) is fulfilled. By Lemma \ref{l1}, 
the function $f(y)$ satisfies equation (\ref{26.3}). Assume that $2y\in A(Y, K)$.
In view of (\ref{26.13}), it follows from (\ref{26.3}) that $y\in A(Y, K)$.
By Lemma \ref{l03.1}, $K$ is a Corwin group. 

Assume that (ii) is fulfilled. Consider equation (\ref{26.3}). If 
$y\in A(Y, K)$, then both sides of equation  (\ref{26.3}) are equal to 1.
If $y\notin A(Y, K)$, then by Lemma \ref{l03.1}, $2y\notin A(Y, K)$ and
both sides of equation  (\ref{26.3}) are equal to 0. We see that 
the characteristic function $\widehat m_K(y)$ satisfies equation (\ref{26.3}).
By Lemma \ref{l1}, 
the linear forms $2\xi_1$ and $\xi_1+\xi_2+2\alpha\xi_3$ 
are identically distributed.
\end{proof}

Theorem \ref{th1} fails if 
an ${\bm a}$-adic solenoid 
$\Sigma_{\bm a}$ contains  an element of order $2$. Namely, the following 
statement is true.

\begin{proposition}\label{p1}
Assume that an ${\bm a}$-adic solenoid 
$\Sigma_{\bm a}$ contains  an element of order $2$. Let $\alpha$ be a random 
variable with a Bernoulli 
distribution taking values
$0$ and $1$ with probability $\frac{1}{2}$. Then there are 
independent identically distributed 
random variables $\xi_1$, $\xi_2$, and $\xi_3$ with values in $\Sigma_{\bm a}$ 
and a symmetric distribution  $\mu$ with nonvanishing 
characteristic function such that $\alpha$ is independent with the random vector $(\xi_1, \xi_2, \xi_3)$, the linear forms 
$2\xi_1$ and \ $\xi_1+\xi_2+2\alpha\xi_3$
are identically distributed, whereas
$\mu$ is not represented in the form $\mu=\tau(M)$, where $\tau$ is 
a continuous monomorphism 
$\tau:\mathbb{R}\rightarrow\Sigma_{\bm a}$ and $M$ is a hyperbolic secant distribution  
on $\mathbb{R}$.
\end{proposition}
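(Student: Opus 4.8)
The plan is to produce the required $\mu$ explicitly, as a hyperbolic secant image shifted by an element of order $2$. Since $\Sigma_{\bm a}$ contains an element of order $2$, fix $x_0\in\Sigma_{\bm a}$ with $x_0\ne 0$ and $2x_0=0$. As in the proof of Theorem \ref{th1}, let $\pi:H_{\bm a}\to\mathbb{R}$ be the natural homomorphism $\pi(r)=r$ and $\tau=\tilde\pi:\mathbb{R}\to\Sigma_{\bm a}$ its adjoint; then $\tau$ is a continuous monomorphism because $\pi(H_{\bm a})$ is dense in $\mathbb{R}$. Fix a hyperbolic secant distribution $M$ on $\mathbb{R}$ with $\widehat M(s)=2/(e^{\sigma s}+e^{-\sigma s})$, say with $\sigma\ne 0$ so that the resulting $\mu$ is nondegenerate, and put $\mu=\tau(M)*E_{x_0}$. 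Let $\xi_1,\xi_2,\xi_3$ be independent with distribution $\mu$ and independent of $\alpha$. Then $\mu$ is symmetric, being a convolution of the symmetric distributions $\tau(M)$ and $E_{x_0}$ (note $E_{x_0}=E_{-x_0}$ since $2x_0=0$), and, exactly as in the proof of Theorem \ref{th1} (see (\ref{26.16})),
\begin{equation*}
\hat\mu(y)=\widehat{\tau(M)}(y)\,(x_0,y)=\frac{2}{e^{\sigma y}+e^{-\sigma y}}\,(x_0,y),\qquad y\in H_{\bm a},
\end{equation*}
which does not vanish.

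The second step is to check equation (\ref{26.3}) for $\hat\mu$ and apply Lemma \ref{l1}. Since $2x_0=0$, for every $y\in H_{\bm a}$ we have $(x_0,2y)=(2x_0,y)=1$ and $(x_0,y)^2=(2x_0,y)=1$; substituting these into (\ref{26.3}) shows that (\ref{26.3}) for $\hat\mu$ is equivalent to (\ref{26.3}) for $\widehat{\tau(M)}$. The latter holds: clearing denominators it reduces to the identity $\cosh 2u=2\cosh^2 u-1$. (Equivalently, $\tau(M)$ already satisfies statement (i) by the implication (ii)$\Longrightarrow$(i) of Theorem \ref{th1}, which, as noted there, does not use the assumption that $\Sigma_{\bm a}$ has no element of order $2$.) Hence $\hat\mu$ satisfies (\ref{26.3}), and Lemma \ref{l1} gives that the linear forms $2\xi_1$ and $\xi_1+\xi_2+2\alpha\xi_3$ are identically distributed.

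It remains to show that $\mu$ is not of the form $\tau'(M')$ for a continuous monomorphism $\tau':\mathbb{R}\to\Sigma_{\bm a}$ and a hyperbolic secant distribution $M'$ on $\mathbb{R}$. If it were, then with $\pi'=\widetilde{\tau'}:H_{\bm a}\to\mathbb{R}$ we would have $\hat\mu(y)=\widehat{M'}(\pi'(y))=2/(e^{\sigma'\pi'(y)}+e^{-\sigma'\pi'(y)})>0$ for all $y\in H_{\bm a}$. But since $x_0\ne 0$ and the characters of $\Sigma_{\bm a}$ separate points, there is $y_1\in H_{\bm a}$ with $(x_0,y_1)\ne 1$, and then $(x_0,y_1)=-1$ because $(x_0,y_1)^2=1$; consequently $\hat\mu(y_1)=-2/(e^{\sigma y_1}+e^{-\sigma y_1})<0$, a contradiction. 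This proves the proposition. The only real obstacle is spotting the construction: the multiplier $(x_0,\cdot)$ is a character of $\Sigma_{\bm a}$ of order $2$, and it is exactly the presence of an element of order $2$ in $\Sigma_{\bm a}$ that makes this multiplier both nontrivial (so that $\mu$ cannot be a hyperbolic secant image) and compatible with equation (\ref{26.3}) (through $(x_0,2y)=1$ and $(x_0,y)^2=1$); everything else is a routine verification. In fact even $\mu=E_{x_0}$, obtained by taking $\sigma=0$, already works, the linear forms then being degenerate at the zero.
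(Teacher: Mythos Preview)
Your proof is correct and takes a genuinely different, much shorter route than the paper. The paper does not shift by the order-$2$ element; instead it exploits the fact that when $\Sigma_{\bm a}$ has an element of order $2$, the group $H_{\bm a}$ is \emph{not} $2$-divisible, so that $H_{\bm a}\setminus\{0\}$ splits into forward orbits $L_z=\{2^kz\}_{k\ge 0}$ indexed by the ``odd'' rationals $z\in H$. It then freely chooses a small positive value $c(z)\in(0,1)$ at each base point $z$, propagates along the orbit by the recursion $c(2^{k+1}z)=c^2(2^kz)/(2-c^2(2^kz))$ forced by (\ref{26.3}), and checks that the resulting function is the Fourier transform of a genuine probability density on $\Sigma_{\bm a}$ by making $\sum c(z)$ small enough.

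Your argument is more elegant for the proposition as stated: the single multiplier $(x_0,\cdot)$ already destroys positivity of $\hat\mu$ while leaving equation (\ref{26.3}) untouched, and your observation that even $\mu=E_{x_0}$ suffices is a nice minimality check. What the paper's construction buys, and yours does not, is the concluding remark there: because the values $c(z)$ are essentially arbitrary, the family of admissible $\mu$ is so large that one ``can hardly expect to find a description of possible distributions $\mu$''. Your one-parameter family $\tau(M)*E_{x_0}$ proves the proposition but does not by itself support that stronger heuristic conclusion.
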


\begin{proof}
Let $H_{\bm a}$ be the character group of the group $\Sigma_{\bm a}$. 
Since the group $\Sigma_{\bm a}$ contains  an element of order $2$, the 
sequence  ${\bm a}=(a_0, a_1,a_2,\dots)$ contains only finite number
even $a_j$. Without loss of generality we can suppose that all $a_j$ are odd.
Consider the set
\begin{equation*}\label{07.1}
H=\left\{z=\frac{m}{n}\in H_{\bm a}:m, n \ \mbox{are odd}\right\}.
\end{equation*}
Take $z\in H$ and put $L_z=\{2^kz\}_{k=0}^{\infty}$. Then the sets $L_z$
do not intersect and
$$
H_{\bm a}=\{0\}\cup\bigcup_{z\in H}L_z.  
$$

Let us define a function $c(y)$ on the set $L_z$ as follows. Take
  a real number $c(z)$ such that $0<c(z)<1$. Suppose that 
 the value $c(2^kz)$ of the function $c(y)$ is already defined. Put
 \begin{equation}\label{26.18}
c(2^{k+1}z)=\frac{c^2(2^kz)}{2-c^2(2^kz)}.  
\end{equation}
It is obvious that for any $y\in H_{\bm a}$, $y\ne 0$,  there is a unique element
$z\in H$ and a nonnegative integer $k$ such that $y\in L_z$ and $y=2^kz$. 
We define the function $f(y)$ on the 
group $H_{\bm a}$ as follows:
$$
f(y)=
\begin{cases}
c(2^kz) & \text{\ if\ }\   y=2^kz\in L_z,
\\  1  & \text{\ if\ }\ y=0. 
\end{cases}
$$
In view of (\ref{26.18}) the function $f(y)$ satisfies equation (\ref{26.3}).
We can choose the numbers $c(z)$ in such a way that
$c(-z)=c(z)$ for all $z\in H$ and
$$
\sum_{z\in H}c(z)<\frac{1}{2}.
$$
It follows from (\ref{26.18}) that $c(2^{k+1}z)<c^2(2^kz)$, $k=0, 1, \dots$, and 
hence $c(2^{k}z)<c^{2^k}(z)$, $k=1, 2, \dots$.  This implies that
$$
\sum_{y\in L_z}c(y)<2c(z),
$$
and hence
\begin{equation}\label{26.19}
\sum_{y\in H_{\bm a}, \ y\ne 0}f(y)<1.  
\end{equation}
Consider on the group $\Sigma_{\bm a}$ the function 
\begin{equation}\label{n26.19}
\rho(g)=1+\sum_{y\in H_{\bm a}, \ y\ne 0}f(y)\overline{(g, y)}, \quad g\in\Sigma_{\bm a}.
\end{equation}
It follows from (\ref{26.19}) that $\rho(g)>0$ for all $g\in\Sigma_{\bm a}$. 
It is obvious that
$$
\int\limits_{\Sigma_{\bm a}}\rho(g)dm_{\Sigma_{\bm a}}(g)=1.
$$
Denote by $\mu$ the distribution on the group $\Sigma_{\bm a}$ with the density $\rho(g)$ with respect to $m_{\Sigma_{\bm a}}$. In view of (\ref{n26.19}), we have
 $\hat\mu(y)=f(y)$ for all $y\in H_{\bm a}$. Since the function $f(y)$
 is real-valued, the distribution $\mu$ is symmetric.
Let $\xi_1$, $\xi_2$, and $\xi_3$ be independent identically distributed 
random variables with values in $\Sigma_{\bm a}$ and distribution  $\mu$ 
such that $\alpha$ is independent with $\xi_1$, $\xi_2$, 
and $\xi_3$. 
Since the function $f(y)$ satisfies equation (\ref{26.3}), by Lemma \ref{l2},
the linear forms $2\xi_1$ and $\xi_1+\xi_2+2\alpha\xi_3$ 
are identically distributed. Taking into account
(\ref{26.16}), we can choose the numbers $c(z)$ in such a way
that the distribution $\mu$ is not represented in the 
form $\mu=\tau(M)$, where $\tau$ is 
a continuous monomorphism 
$\tau:\mathbb{R}\rightarrow\Sigma_{\bm a}$ and $M$ is a 
hyperbolic secant distribution  
on $\mathbb{R}$.  Moreover, the above reasoning shows that we can hardly 
expect to find a description of possible distributions $\mu$.
\end{proof}

\begin{remark}\label{r27.1}
Consider now the case where random variables $\xi_1$, $\xi_2$, and $\xi_3$ 
take values in the circle group $\mathbb{T}$.
 
 Let $\alpha$ be a random 
variable with a Bernoulli 
distribution taking values
$0$ and $1$ with probability $\frac{1}{2}$. It is easy to see that then there are 
independent identically distributed 
random variables $\xi_1$, $\xi_2$, and $\xi_3$ with values in 
the circle group $\mathbb{T}$ 
and a symmetric distribution  $\mu$ with nonvanishing 
characteristic function such that $\alpha$ is independent 
with the random vector $(\xi_1, \xi_2, \xi_3)$, the linear forms 
$2\xi_1$ and \ $\xi_1+\xi_2+2\alpha\xi_3$
are identically distributed, whereas
$\mu$ is not represented in the form $\mu=\tau(M)$, where $\tau$ is 
a continuous homomorphism 
$\tau:\mathbb{R}\rightarrow\mathbb{T}$ and $M$ is a hyperbolic secant distribution  
on $\mathbb{R}$.
\end{remark}
 
Let $X$ be a  locally  compact connected Abelian group of dimensional 1. Then
$X$ is topologically isomorphic either to the group of real numbers $\mathbb{R}$, 
or an ${\bm a}$-adic solenoid $\Sigma_{\bm a}$, or  the circle group $\mathbb{T}$.
Let $\xi_1$, $\xi_2$, and $\xi_3$ be independent identically distributed 
random variables with values in $X$ and a symmetric  
distribution  $\mu$. 
Let $\alpha$ be a random variable with a Bernoulli distribution taking values
$0$ and $1$ with probability $\frac{1}{2}$. 
Assume that $\alpha$ is independent with 
the random vector $(\xi_1, \xi_2, \xi_3)$.
Theorems A and \ref{th1},  
Proposition  \ref{p1} and Remark \ref{r27.1} answer the question on 
the possibility of describing distributions
$\mu$ provided that the linear forms $2\xi_1$ and $\xi_1+\xi_2+2\alpha\xi_3$ 
are identically distributed.

\section{Characterization of Probability Distributions on Locally Compact 
Totally Disconnected Abelian Groups}

In this section we prove an analogue of Theorem A in the case where 
random variables take values in a locally compact
 totally disconnected Abelian group $X$. Taking into account the structure
 of the group $X$, quite unexpected is the fact that the proof uses Theorem A.
For the proof we need two lemmas. The first lemma is well known. 
For the proof see, e.g., \cite[Proposition 2.10]{Febooknew}.

\begin{lemma}\label{l3}  Let $X$ be a locally compact Abelian group  
with character group
 $Y$
and let $\mu$ be a distribution on $X$. Then the set  
$E=\{y\in Y:  \hat\mu(y)=1\}$ is a closed subgroup  of the group $Y$ and 
the distribution   $\mu$ is supported in  $A(X,E)$. Moreover, 
if the minimal subgroup
containing the support of $\mu$ coincides with $X$, then
\begin{equation}\label{1.1}
\{y\in Y:  \hat\mu(y)=1\}=\{0\}.
\end{equation}
\end{lemma}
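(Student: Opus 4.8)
The plan is to reduce everything to a single pointwise observation: for $y\in Y$, one has $\hat\mu(y)=1$ if and only if $(x,y)=1$ for $\mu$-almost all $x\in X$. The "if" direction is trivial. For the converse, since $|(x,y)|=1$ we have $\mathrm{Re}\,(x,y)\le 1$ pointwise, so $\hat\mu(y)=1$ gives $\int_X\bigl(1-\mathrm{Re}\,(x,y)\bigr)\,d\mu(x)=0$ with a nonnegative integrand; hence $\mathrm{Re}\,(x,y)=1$, and therefore $(x,y)=1$, for $\mu$-a.a.\ $x$. For each $y\in E$ fix a Borel set $S_y$ with $\mu(S_y)=1$ on which $(x,y)\equiv 1$.

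From this, $E$ being a closed subgroup is immediate: closedness follows from continuity of $\hat\mu$, since $E=\hat\mu^{-1}(\{1\})$; and if $y_1,y_2\in E$, then on $S_{y_1}\cap S_{y_2}$, still of full $\mu$-measure, we have $(x,y_1-y_2)=(x,y_1)\overline{(x,y_2)}=1$, so $\hat\mu(y_1-y_2)=1$, i.e.\ $y_1-y_2\in E$.

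To show $\mu$ is supported in $A(X,E)$ is where I expect the only real difficulty, because $E$ may be uncountable and one cannot naively take the union of the null sets $X\setminus S_y$. Here I would use the standing hypothesis that $X$ is second countable, which makes $Y$, and hence its subspace $E$, second countable and in particular separable. Choosing a countable dense subset $\{y_n\}\subseteq E$ and putting $S=\bigcap_n S_{y_n}$, we get $\mu(S)=1$; for each $x\in S$ the continuous map $y\mapsto (x,y)$ equals $1$ on the dense set $\{y_n\}$, hence on all of $E$, so $S\subseteq A(X,E)$. Thus $\mu(A(X,E))=1$, and since $A(X,E)$ is closed, $\mathrm{supp}\,\mu\subseteq A(X,E)$.

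For the last claim, note that $A(X,E)$ is a closed subgroup of $X$ containing $\mathrm{supp}\,\mu$; if the minimal subgroup containing $\mathrm{supp}\,\mu$ is $X$, this forces $A(X,E)=X$, i.e.\ $(x,y)=1$ for all $x\in X$, $y\in E$. Since the characters of $X$ separate points (Pontryagin duality), this means $E=\{0\}$, which is (\ref{1.1}).
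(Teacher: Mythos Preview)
Your proof is correct. The paper itself does not give a proof of this lemma; it simply states that the result is well known and refers to \cite[Proposition 2.10]{Febooknew}. Your argument --- the pointwise criterion $\hat\mu(y)=1\iff (x,y)=1$ for $\mu$-a.a.\ $x$, closure of $E$ under subtraction, the separability trick (using second countability of $Y$, which is a standing hypothesis in the paper) to pass from a countable dense subset of $E$ to all of $E$, and the final duality step --- is the standard route and is complete as written.
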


Let $p$ be a prime number. An Abelian group  $X$ is called  $p$-{group} 
if the order of every 
element of $X$ is a power of $p$. 

\begin{lemma}\label{l4}  Let $H$ be a discrete $2$-group. 
Let $f(y)$ be a characteristic function on $H$ satisfying equation
$\rm (\ref{26.3})$. Assume that  
\begin{equation}\label{1.2}
\{y\in H:  f(y)=1\}=\{0\}.
\end{equation}
Then $H$ is isomorphic to the  group 
 $\mathbb{Z}(2)$ and $f(1)=-1$.
\end{lemma}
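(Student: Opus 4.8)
The plan is to analyze equation $(\ref{26.3})$ pointwise on the orbits of multiplication by $2$ in the discrete $2$-group $H$. First I would observe that since $H$ is a $2$-group, for every $y\in H$ there is a natural number $n$ with $2^n y=0$, hence $f(2^n y)=f(0)=1$. Plugging this into $(\ref{26.3})$ and working backwards, I would show that $f(2^{n-1}y)^2=1$, so $f(2^{n-1}y)=\pm 1$. Because $f$ is a characteristic function, $|f|\le 1$ and $f(-y)=\overline{f(y)}$; combined with the real-valuedness forced by the structure of the equation (or, if not yet known, I would note that $(\ref{26.3})$ forces $f(2y)$ real and then descend), the value $f(2^{n-1}y)$ lies in $\{-1,1\}$. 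The key point is that $f(z)=1$ for some $z\ne 0$ is excluded by hypothesis $(\ref{1.2})$, so whenever we land on $\pm 1$ at a nonzero element the value must be exactly $-1$.

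Next I would run the following descent: suppose $f(2y)=-1$. Then $(\ref{26.3})$ gives $-1=f(y)^2\cdot\frac{-1+1}{2}=0$, a contradiction. Hence $f(2y)=-1$ can only happen when $2y=0$. Therefore the only element of $H$ on which $f$ takes the value $-1$ (more precisely, the only element where $f$ can be $-1$) is an element of order $2$ (or $0$, but $f(0)=1\ne-1$). Now take any nonzero $y\in H$ and let $n\ge 1$ be minimal with $2^n y=0$. If $n\ge 2$, then $2^{n-1}y\ne 0$ but $2(2^{n-1}y)=0$, and from the previous step $f(2^{n-1}y)^2=1$, forcing $f(2^{n-1}y)=-1$ by $(\ref{1.2})$ — this is allowed since $2^{n-1}y$ has order $2$. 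But then consider $w=2^{n-2}y$: we have $f(2w)=f(2^{n-1}y)=-1$, which by the contradiction just derived is impossible unless $2w=0$, i.e. $n-1=0$. This shows $n=1$ for every nonzero $y$, i.e. every nonzero element of $H$ has order $2$, so $H$ is an elementary abelian $2$-group (a vector space over $\mathbb{Z}(2)$) and $f(y)\in\{-1,1\}$ for all $y$, with $f(y)=-1$ for all $y\ne 0$ by $(\ref{1.2})$.

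It remains to rule out $H$ having more than one nonzero element. Here I would use that $f$, being a characteristic function taking only the values $\pm1$, is the characteristic function of $m_K$ for $K=A(X,E')$ where $E'=\{y:f(y)=1\}=\{0\}$; by the formula $(\ref{26.13})$, $\widehat{m_K}(y)=1$ iff $y\in A(Y,K)$, so $A(Y,K)=\{0\}$, i.e. $f\equiv\widehat{m_K}$ with this annihilator trivial. Alternatively, and more elementarily, $g(y):=\tfrac12(1+f(y))$ is then the indicator of $\{0\}$ and must itself be a (nonnegative-definite) characteristic function up to normalization; but the honest route is: a $\{\pm1\}$-valued character-like function must be a character of $H$ composed appropriately, and if $H$ had two independent order-$2$ elements $y_1,y_2$, then $f(y_1)=f(y_2)=f(y_1+y_2)=-1$, while multiplicativity-type constraints (e.g. applying $(\ref{26.9})$, the inequality $1-f(y_1+y_2)\le 2[(1-f(y_1))+(1-f(y_2))]$, which here reads $2\le 2(2+2)=8$, so that is not enough) — so I would instead invoke that a real characteristic function with values in $\{-1,1\}$ whose $1$-set is $\{0\}$ forces $H\cong\mathbb{Z}(2)$, because $\{f=1\}$ being a subgroup of index $2$ would give $H/\{0\}\cong\mathbb{Z}(2)$. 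Precisely: $f$ takes value $1$ exactly on a subgroup $E'$ (by Lemma \ref{l3}), here $E'=\{0\}$; and $f$ takes the value $-1$ on the coset(s) — but for $f$ to be positive definite with only two values $\pm1$, the set $\{f=-1\}$ together with $\{f=1\}$ must make $\{f=1\}$ of index $2$, hence $|H|=2$. Thus $H\cong\mathbb{Z}(2)$ and $f(1)=-1$. The main obstacle I anticipate is this last step — cleanly justifying that a $\{\pm1\}$-valued characteristic function with trivial $1$-set can only live on $\mathbb{Z}(2)$; the cleanest argument is that such an $f$ equals $2\widehat{m_{\{0\}}}-1$ only formally, so I would instead note $f=\widehat\mu$ with $\mu$ concentrated, by Lemma \ref{l3}, on $A(X,\{0\})=X$, and that $(1+f)/2$ being $\{0,1\}$-valued and positive definite forces it to be $\widehat{m_K}$ for a compact open $K$ with $A(Y,K)=\{0\}$ and $A(Y,K)$ of index $|Y/A(Y,K)|=2$, whence $|H|=2$.
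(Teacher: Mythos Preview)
Your descent argument showing that $H$ has no elements of order $\ge 4$ is correct and matches the paper's approach (the paper phrases it as: an element $y_2$ of order $4$ would give $f(2y_2)=-1$, and substituting $y=y_2$ into \eqref{26.3} yields $-1=0$). Likewise, your conclusion that $f(y)=-1$ for every nonzero $y$ is right. One small wording slip: when you derive a contradiction from $f(2y)=-1$, the contradiction holds for \emph{every} $y$, not ``unless $2y=0$''; indeed $2y=0$ forces $f(2y)=1\ne -1$, so the exceptional clause is vacuous. This does not affect the logic, only the phrasing.

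The genuine gap is the final step, ruling out $|H|>2$. You cycle through several ideas but none is completed: inequality \eqref{26.9} is, as you note, too weak; the claim that positive definiteness with values in $\{\pm 1\}$ forces $\{f=1\}$ to have index $2$ is exactly what needs proof; and the $(1+f)/2$ route, as written, only identifies $(1+f)/2$ with $\widehat{m_X}$, which is a legitimate characteristic function on \emph{any} $H$ and so does not by itself bound $|H|$. (That route can be salvaged: from $f=2\widehat{m_X}-\widehat{E_0}$ one gets $\mu=2m_X-E_0$, and nonnegativity at the identity forces $m_X(\{0\})\ge 1/2$, hence $|X|=|H|\le 2$. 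But you do not make this argument.)

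The paper's fix is short and worth adopting: if $H$ contained two distinct elements $y_1,y_3$ of order $2$, set $L=\langle y_1,y_3\rangle\cong\mathbb{Z}(2)\times\mathbb{Z}(2)$. On $L$ we have $|f|\equiv 1$, and a characteristic function of modulus $1$ everywhere is necessarily a character (the underlying measure is a Dirac mass). But no character of $\mathbb{Z}(2)\times\mathbb{Z}(2)$ equals $-1$ on all three nonzero elements, contradicting \eqref{1.2}. That one-line invocation of ``$|f|\equiv 1\Rightarrow f$ is a character'' is the missing ingredient in your attempt.
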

\begin{proof}
Let $y_1\in H$ and let $y_1$ be an element of order 2. Substituting $y=y_1$ into equation 
(\ref{26.3}), we get $f^2(y_1)=1$. In view of (\ref{1.2}), $f(y_1)=-1$. 
Let $y_2\in H$ and let $y_2$ be an element of order 4. Substituting $y=y_2$ into 
equation (\ref{26.3}) and taking into account that $f(2y_2)=-1$, we obtain a contradiction. 
Hence the group $H$ contains no elements of order 4. Assume that there is an element
$y_3\in H$ of order 2, such that $y_1\ne y_3$. Consider the subgroup $L$
of $H$ generated by the elements $y_1$ and $y_3$. 
Then $L$ is isomorphic to  $\mathbb{Z}(2)\times \mathbb{Z}(2)$. On the one hand, 
as noted above, 
\begin{equation}\label{1.4}
f(y)=-1 \ \mbox{for all} \ y\in L, \ y\ne 0.
\end{equation}
 On the other hand, 
from (\ref{1.4}) it follows  that $|f(y)|=1$ for all $y\in L$. Since $f(y)$ 
is a characteristic function, this implies
that the function $f(y)$ is a character of the group $L$, that contradicts (\ref{1.4}).
Hence the group $H$ contains the only one element  of order 2. The lemma is proved.
\end{proof}

\begin{theorem}\label{th2} Let $X$ be a locally compact 
totally disconnected Abelian group.
Let $\xi_1$, $\xi_2$, and $\xi_3$ be independent identically distributed 
random variables with values in $X$ and a symmetric  
distribution  $\mu$. 
Let $\alpha$ be a random variable with a Bernoulli distribution taking values
$0$ and $1$ with probability $\frac{1}{2}$.
Assume that $\alpha$ is independent with the random vector $(\xi_1, \xi_2, \xi_3)$.
Then the following statements are equivalent:
\renewcommand{\labelenumi}{\rm(\roman{enumi})}
\begin{enumerate}
  
\item	

the linear forms $2\xi_1$ and $\xi_1+\xi_2+2\alpha\xi_3$ 
are identically distributed; 

\item
there is a compact Corwin subgroup $K$ of the group $X$ such that either $\mu=m_K$
or $\mu=m_K*E_x$, where $x$ is an element of order $2$.
\end{enumerate}
\end{theorem}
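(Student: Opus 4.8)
The plan is the following. By Lemma~\ref{l1}, statement~(i) is equivalent to the functional equation $(\ref{26.3})$ for $f(y):=\hat\mu(y)$; since $\mu$ is symmetric, $f$ is real-valued, and rewriting $(\ref{26.3})$ as $f(2y)=f^{2}(y)/(2-f^{2}(y))$ yields at once the inequalities $0\le f(2y)\le f^{2}(y)\le 1$, the equivalence $f(2y)=0\Longleftrightarrow f(y)=0$, and the implication $f(y)=-1\Rightarrow f(2y)=1$ (so that $f(y)=-1$ forces $2y=0$). For the implication (ii)$\Longrightarrow$(i) it suffices, by Lemma~\ref{l1}, to verify that $\hat m_{K}$ and $\widehat{m_{K}*E_{x}}(y)=\hat m_{K}(y)(x,y)$ satisfy $(\ref{26.3})$: on $A(Y,K)$ both sides are computed directly (using $(x,y)^{2}=(2x,y)=1$ when $x$ has order~$2$), and for $y\notin A(Y,K)$ Lemma~\ref{l03.1} gives $2y\notin A(Y,K)$, so both sides vanish.

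For (i)$\Longrightarrow$(ii) I would first replace $X$ by the smallest closed subgroup $G$ containing the support of $\mu$; then $G$ is again totally disconnected, its character group is $Y/A(Y,G)$, and by Lemma~\ref{l3} one may assume $\{y\in Y:f(y)=1\}=\{0\}$. Since $X$ is totally disconnected it has a compact open subgroup $K_{0}$, so $A(Y,K_{0})$ is a compact open subgroup of $Y$, and $f$ restricted to $A(Y,K_{0})$ is the characteristic function of the distribution induced by $\mu$ on the discrete group $X/K_{0}$. The decisive step is to prove that $f$ takes only the values $-1$, $0$, $1$. For an element $y_{0}$ of finite order this is elementary: iterating $(\ref{26.3})$ gives $f(2^{n}y_{0})=\Phi^{n}(f(y_{0}))$, where $\Phi(t)=t^{2}/(2-t^{2})$ maps $(-1,1)\setminus\{0\}$ into $(0,1)$ and acts there as a strict contraction towards~$0$; since the orbit $\{2^{n}y_{0}\}$ is finite and, on its odd part, periodic, while $\{y:f(y)=1\}=\{0\}$, the assumption $0<|f(y_{0})|<1$ is impossible.

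\textbf{The main obstacle} is to exclude elements $y_{0}$ of infinite order with $0<|f(y_{0})|<1$, and it is precisely here that the totally disconnected structure of $X$ must interact, unexpectedly, with Theorem~A. The idea is that whenever $y_{0}$ admits a sequence of halvings $2^{-n}y_{0}\in Y$, transporting $f$ along the homomorphism $\mathbb{Q}_{2}\to Y$, $r\mapsto ry_{0}$, produces a real-valued characteristic function $g(r)=f(ry_{0})$ on $\mathbb{Q}_{2}$ with $g(1)=f(y_{0})\ne 0$ and $0\le g(2r)\le g^{2}(r)$; by Lemma~\ref{l2} this $g$ is uniformly continuous in the topology induced from $\mathbb{R}$, hence extends to a continuous positive definite solution of $(\ref{26.6})$ on $\mathbb{R}$, and Theorem~A then forces $g$ to be a nondegenerate hyperbolic secant (nondegenerate because $g(1)\ne 1$), so that $f(2^{-n}y_{0})=g(2^{-n})\to 1$ whereas $f(2^{n}y_{0})=g(2^{n})\to 0$. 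Confronting this with the continuity of $f$ on the compact subgroup $\overline{\{ry_{0}:r\in\mathbb{Q}_{2}\}}$ of $Y$ and with $\{y:f(y)=1\}=\{0\}$ yields a contradiction; the general case is reduced to this one via the compact open subgroup $A(Y,K_{0})$ (which allows one to pass to the discrete group $X/K_{0}$ and to divide by~$2$ in the relevant direction), with $(\ref{26.9})$, Lemma~\ref{l4}, and the dynamics of $\Phi$ covering the directions in which $2$ is not divisible. I expect this to be the principal difficulty of the theorem, and the only place where Lemma~\ref{l2} and Theorem~A are needed.

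Once it is known that $f$ takes only the values $-1$, $0$, $1$, the remainder is formal. By Lemma~\ref{l4} applied to the $2$-primary subgroup of $Y$, either $Y$ has no element of order~$2$, in which case $B:=\{y:f(y)\ne 0\}=\{0\}$, or $Y$ has a unique element $y_{*}$ of order~$2$ with $f(y_{*})=-1$ and $B=\{0,y_{*}\}$; in either case $B$ is a subgroup, open (since $f$ is continuous), so $K:=A(X,B)$ is a compact subgroup with $A(Y,K)=B$. From the equivalence $f(2y)=0\Longleftrightarrow f(y)=0$ the subgroup $B=A(Y,K)$ has the property $2y\in B\Rightarrow y\in B$, whence $K$ is a Corwin group by Lemma~\ref{l03.1}. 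Finally $|f|\equiv 1$ on $A(Y,K)$, which is the character group of $X/K$, so the distribution induced by $\mu$ on $X/K$ is a point mass $E_{x_{0}}$ with $2x_{0}=0$; since $K$ is $2$-divisible, $x_{0}$ lifts to an element $x\in X$ of order dividing~$2$, and one obtains $\mu=m_{K}*E_{x}$, which reduces to $\mu=m_{K}$ when $x_{0}=0$ and gives the asserted decomposition with $x$ of order~$2$ when $x_{0}\ne 0$ (in the case $B=\{0,y_{*}\}$ one necessarily has $x_{0}\ne 0$, for otherwise $\{y:f(y)=1\}=A(Y,K)=B\ne\{0\}$). This establishes~(ii).
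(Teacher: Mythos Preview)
Your treatment of (ii)$\Rightarrow$(i), the dynamics of $\Phi(t)=t^{2}/(2-t^{2})$ on elements of finite order, and the formal endgame are all correct in spirit. The genuine gap is the ``main obstacle'' paragraph: you handle an infinite-order $y_{0}$ only under the extra hypothesis that a full chain of halvings $2^{-n}y_{0}$ exists in $Y$, and the promised reduction of the general case ``via the compact open subgroup $A(Y,K_{0})$'' is never carried out. Even in the favourable case your argument needs (a) uniqueness of halvings, without which the map $\mathbb{Q}_{2}\to Y$, $r\mapsto ry_{0}$, is ill-defined, and (b) compactness of $\overline{\{ry_{0}:r\in\mathbb{Q}_{2}\}}$, which fails whenever $y_{0}$ has infinite order modulo the compact open subgroup $A(Y,K_{0})$. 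So the step that is supposed to eliminate infinite-order elements is not established.

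The paper's proof bypasses this entirely by first forcing $Y$ to be discrete torsion, after which only your (correct) finite-order analysis is needed. Two short steps do this. First, since $X$ is totally disconnected, $c_{Y}$ is compact; choose a continuous $\tau:\mathbb{R}\to c_{Y}$ with dense image, set $f(s)=\hat\mu(\tau(s))$, and apply Lemma~\ref{l1} and Theorem~A to get $f(s)=2/(e^{\sigma s}+e^{-\sigma s})$. Compactness of $c_{Y}$ gives $s_{n}\to\infty$ with $\tau(s_{n})$ in a fixed neighbourhood of $0$, so $f(s_{n})\ge a>0$, forcing $\sigma=0$; hence $\hat\mu\equiv 1$ on $c_{Y}$ and $c_{Y}=\{0\}$. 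Second, with $Y$ totally disconnected, every neighbourhood of $0$ contains a compact open subgroup $L$; choose $L$ small enough that $\hat\mu>0$ on $L$, let $y_{0}\in L$ realise $\min_{L}\hat\mu$, and note that $\hat\mu(y_{0})<1$ would give $\hat\mu(2y_{0})\le\hat\mu^{2}(y_{0})<\hat\mu(y_{0})$ with $2y_{0}\in L$, a contradiction. Thus $\hat\mu\equiv 1$ on $L$, so $L=\{0\}$ and $Y$ is discrete; then $X$ is compact totally disconnected and $Y$ is discrete torsion. This minimum argument on a compact open subgroup is the key device you are missing; it is considerably simpler than the $\mathbb{Q}_{2}$/Lemma~\ref{l2} machinery, and it makes the infinite-order case disappear rather than requiring a direct attack on it.
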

\begin{proof}
By the structure theorem, any locally compact Abelian group 
is topologically isomorphic to a group  of the form 
$\mathbb{R}^n\times G$, where $n\ge 0$ and $G$ a locally compact Abelian group
containing a compact open subgroup (\!\!\cite[(24.30)]{HeRo1}).
Denote by $Y$ the character group of the group $X$ and by $c_Y$ the connected 
component of the zero of the group $Y$. Taking into account that any 
locally compact connected Abelian group is topologically isomorphic to 
a group  of the form 
$\mathbb{R}^m\times K$, where $m\ge 0$ and $K$ is a compact connected
Abelian group, we conclude that $c_Y$ is a compact connected Abelian group.

Let us prove (i) $\Longrightarrow$ (ii). In view of  Lemma \ref{l3}, we can suppose
that (\ref{1.1}) is fulfilled.
 Since $c_Y$ is a compact connected Abelian group, there is a 
 continuous homomorphism
$\tau:\mathbb{R}\rightarrow c_Y$ such that $\tau(\mathbb{R})$ is 
dense in $c_Y$ (\!\!\cite[(9.2), (25.18)]{HeRo1}).
Consider on the real line the function
$$
f(s)=\hat\mu(\tau(s)), \quad s\in \mathbb{R}.
$$
Then $f(s)$ is a characteristic function on the real line.
By Lemma \ref{l1}, the characteristic function $\hat\mu(y)$ satisfies 
equation (\ref{26.14}).
Hence the function $f(s)$ satisfies the equation 
\begin{equation}\label{1.5} 
f(2s)=f^2(s)\frac{f(2s)+1}{2}, \quad s\in \mathbb{R}.
\end{equation}
Since $\mu$ is a symmetric distribution,  the function $f(s)$ is real-valued.
Applying Lemma \ref{l1} and Theorem A,  from (\ref{1.5}) we obtain 
\begin{equation}\label{1.3}  
f(s)=\frac{2}{e^{\sigma s}+{e^{-\sigma s}}}, \quad s\in \mathbb{R},
\end{equation} 
where $\sigma$ is a real number. Since $\hat\mu(0)=1$, we can take a neighborhood 
$U$ of the zero of the group $c_Y$ and $a>0$
in such a way that $\hat\mu(y)\ge a$ for all $y\in U$. 
As much as $\tau(\mathbb{R})$ is dense in $c_Y$, it is easy to see that there is
a sequence $s_n\rightarrow\infty$ such that $\tau(s_n)\in U$ for all $s_n$.
This implies that $f(s_n)\ge a$ for all $s_n$.
If $\sigma\ne 0$ in (\ref{1.3}), then $f(s_n)\rightarrow 0$, as 
$s_n\rightarrow\infty$.
The obtained contradiction shows that $\sigma=0$. We got that $\hat\mu(y)=1$ for
all $y\in\tau(\mathbb{R})$. Since $\tau(\mathbb{R})$ is dense in $c_Y$,  
we obtained that $\hat\mu(y)=1$ for
all $y\in c_Y$. Inasmuch as  (\ref{1.1}) is fulfilled, $c_Y=\{0\}$, i.e., 
the group $Y$ is totally 
disconnected. 

The fact that $Y$ is a totally 
disconnected group implies that any neighborhood of the zero of the group $Y$
contains a compact open subgroup (\!\!\cite[(7.7)]{HeRo1}). Let us denote this 
subgroup by $L$. We can suppose that
$$
\min_{y\in L}\hat\mu(y)=\hat\mu(y_0)>0,
$$
where $y_0\in L$. Substituting $y=y_0$ into equation (\ref{26.14}), we get 
$\hat\mu(2y_0)\le\hat\mu^2(y_0)<\hat\mu(y_0)$ if $\hat\mu(y_0)<1$.
Since $2y_0\in L$, we conclude that $\hat\mu(y_0)=1$, and hence 
$\hat\mu(y)=1$ for all $y\in L$. As much as we assume that (\ref{1.1}) is fulfilled,
$L=\{0\}$. Since $L$ is an open subgroup, 
 $Y$ is a discrete group. This implies that $X$ is a compact group.
  Thus, we have reduced the proof of the theorem to the case when
$X$ is a compact totally disconnected Abelian group. This implies that
$Y$ is a discrete torsion group (\!\!\cite[(24.26)]{HeRo1}). 
Consider two cases.

1. The group $Y$ contains no elements of order 2. 
 Since $Y$ is a discrete torsion  group,
multiplication by 2 is an automorphism of $Y$.  It follows from this
that  for any $y\in Y$ there is
a natural $m$ such that $2^my=y$. Generally speaking, $m$ depends on $y$.
Moreover, taking 
into account that
$Y^{(2)}=Y$ and the characteristic function $\hat\mu(y)$ satisfies 
equation (\ref{26.14}), we find from  
(\ref{26.14}) that $\hat\mu(y)\ge 0$ for all $y\in Y$ and 
$\hat\mu(2y)\le \hat\mu(y)$.   Taking into account the above, we get
$$
\hat\mu(y)=\hat\mu(2^my)\le\hat\mu(2^{m-1}y)\le\cdots
\le\hat\mu(2y)\le\hat\mu(y), \quad y\in Y.
$$
This implies that $\hat\mu(2y)=\hat\mu(y)$ for all $y\in Y$. 
Assume that $\hat\mu(y_0)\ne 0$ for some $y_0\in Y$. Substituting $y=y_0$ into equation  
(\ref{26.14}), we find that $\hat\mu(y_0)=1$, and
in view of (\ref{1.1}), $y_0=0$. Thus, we obtain the following representation
\begin{equation*} 
\hat\mu(y)=
\begin{cases}
1  & \text{\ if\ }\   y=0,
\\  0  & \text{\ if\ }\ y\ne 0.
\end{cases}
\end{equation*}
It follows from (\ref{26.13}) that $\mu=m_X$.  Since multiplication by 2 is an 
automorphism of the group $Y$,  multiplication by 2 is a topological 
automorphism of the group $X$. Hence  $X$ is a Corwin group.  
 
2. Assume that the group $Y$ contains an element of order 2. 
Any torsion Abelian group is isomorphic to the weak 
direct product of its $p$-component, where $p$ is a prime number,
and the $p$-component
is the subgroup consisting of all elements whose order is a power of
$p$ (\!\!\cite[(A.3)]{HeRo1}). 
It follows from Lemma \ref{l4} and (\ref{1.1}) that 2-component of the group $Y$ 
is isomorphic to $\mathbb{Z}(2)$. 
This implies that the group $Y$ is isomorphic
to a group of the form $\mathbb{Z}(2)\times L$, where $L$ is a discrete torsion
group containing no elements of order 2.
Then the group $X$ is topologically isomorphic
to a group of the form $\mathbb{Z}(2)\times K$, where $K$ is a compact 
Corwin group.  To avoid introducing new notation,
suppose that $X=\mathbb{Z}(2)\times K$ and $Y=\mathbb{Z}(2)\times L$.
Denote by $(g, k)$, where $g\in\mathbb{Z}(2)$, $k\in K$, elements of
the group $X$ and by $(h, l)$, where $h\in\mathbb{Z}(2)$, $l\in L$, elements of
the group $Y$.  As was proved in item 1, $\hat\mu(0, l)=0$, for all 
$l\in L$, $l\ne 0$. Substituting $y=(1, l)$, $l\ne 0$, into equation (\ref{26.14}) 
and taking into account that $\hat\mu(0, 2l)=0$, we find that 
$\hat\mu(1, l)=0$.
Moreover, by Lemma \ref{l4}, $\hat\mu(1, 0)=-1$. Thus, we get the following 
representation
\begin{equation}\label{1.6}
\hat\mu(h, l)=
\begin{cases}
1  & \text{\ if\ }\   h=0, \ l=0,
\\  -1  & \text{\ if\ }\ h=1, \ l=0,
\\ 0  & \text{\ if\ }\ l\ne 0.
\end{cases}
\end{equation}
Taking into account that $A(Y, K)=\mathbb{Z}(2)$ and (\ref{26.13}), we have
\begin{equation}\label{3.1}
\widehat m_K(h, l)=
\begin{cases}
1  & \text{\ if\ }\   l=0,
\\  0  & \text{\ if\ }\ l\ne 0.
\end{cases}
\end{equation}
Also note that
\begin{equation}\label{3.2}
\hat E_{(1, 0)}(h,l)=((1, 0), (h,l))=\begin{cases}
1  & \text{\ if\ }\   h=0,
\\  -1  & \text{\ if\ }\ h=1.
\end{cases}
\end{equation}
It follows from (\ref{1.6})--(\ref{3.2}) that 
$\hat\mu(h,l)=\widehat m_K(h,l)\hat E_{(1, 0)}(h,l)$.
Hence $\mu=m_K*E_{(1, 0)}$. Thus, we proved that (i) $\Longrightarrow$ (ii).

Let us prove (ii) $\Longrightarrow$ (i). Assume that 
$K$ is a compact Corwin group. 
Let $\mu=m_K$.   
By Proposition \ref{c2}, the linear forms $2\xi_1$ and $\xi_1+\xi_2+2\alpha\xi_3$ 
are identically distributed.

Let  $\mu=m_K*E_x$, where $x$ is an element of order $2$.
By Lemma \ref{l1}, 
to prove (i) it suffices to verify that the characteristic function
$\hat\mu(y)$ satisfies  equation (\ref{26.14}). 
 We have
$\hat\mu(y)=\widehat m_K(y)(x, y)$.
If $y\in A(Y, K)$, then $2y\in A(Y, K)$ and  
$\widehat m_K(y)=\widehat m_K(2y)=1$, $(x, y)=\pm 1$,
$(x, 2y)=1$. This implies that both sides of equation (\ref{26.14}) are equal to 1.
If $y\notin A(Y, K)$, by Lemma \ref{l03.1}, this implies that
 $2y\notin A(Y, K)$ and both sides of equation
(\ref{26.14}) are equal to 0. 
Hence  the characteristic function
$\hat\mu(y)$ satisfies  equation (\ref{26.14}). 
The theorem is completely proved.
\end{proof}

\section{Characterization of the Symmetric Two-point Distribution on 
Locally Compact Abelian Groups Containing no Nonzero Compact Subgroups}

In \cite{Kl2023}, L.B.~Klebanov proved the following characterization 
theorem for the symmetric two-point distribution on the real line.
\begin{B} [{\!\!\protect\cite{Kl2023}}] 
Let $\xi_1$  and $\xi_2$ be independent identically distributed 
real-valued random variables with a symmetric  
distribution  $\mu$. 
Let $\alpha$ be a random variable with a Bernoulli distribution taking values
$0$ and $1$ with probability $\frac{1}{2}$.
Assume that $\alpha$ is independent with the random vector $(\xi_1, \xi_2)$.
Then the following statements are equivalent:
\renewcommand{\labelenumi}{\rm(\roman{enumi})}
\begin{enumerate}
  
\item	

the linear forms $2\alpha\xi_1$ and $\xi_1+\xi_2$ 
are identically distributed; 

\item
 there is a real number $a$ such that $\mu=\frac{1}{2}\left(E_{a}+E_{-a}\right)$.
\end{enumerate}
\end{B}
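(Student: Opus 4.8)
The plan is to translate condition~(i) into a single convolution identity and then — rather than solving the resulting functional equation in the class of characteristic functions — to extract~(ii) from a comparison of point masses. Since $\xi_1,\xi_2$ are independent with distribution $\mu$ and $\alpha$ is independent of $(\xi_1,\xi_2)$, the distribution of $2\alpha\xi_1$ is $\frac{1}{2}E_0+\frac{1}{2}\mu_2$, where $\mu_2$ denotes the distribution of $2\xi_1$, while the distribution of $\xi_1+\xi_2$ is $\mu*\mu$. Thus (i) is equivalent to
\begin{equation*}
\mu*\mu=\frac{1}{2}E_0+\frac{1}{2}\mu_2 ,
\end{equation*}
that is, passing to characteristic functions, to $\hat\mu(s)^2=\frac{1}{2}\bigl(1+\hat\mu(2s)\bigr)$ for all $s\in\mathbb{R}$. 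From this the implication (ii)$\Longrightarrow$(i) is immediate: if $\hat\mu(s)=\cos as$, then $\hat\mu(s)^2=\cos^2 as=\frac{1}{2}(1+\cos 2as)=\frac{1}{2}(1+\hat\mu(2s))$ by the double angle formula.

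For (i)$\Longrightarrow$(ii) I would compare the masses the two sides of the convolution identity assign to the point $0$. Since $\mu$ is symmetric, $(\mu*\mu)(\{0\})=\sum_{x}\mu(\{x\})\mu(\{-x\})=\sum_{x}\mu(\{x\})^2$, while $\mu_2(\{0\})=\mu(\{0\})$, so
\begin{equation*}
\sum_{x}\mu(\{x\})^2=\frac{1}{2}\bigl(1+\mu(\{0\})\bigr).
\end{equation*}
In particular $\mu$ has atoms. Because $\sum_x\mu(\{x\})\le 1$, for each $\varepsilon>0$ only finitely many atoms have mass exceeding $\varepsilon$, so $p:=\sup_x\mu(\{x\})$ is attained; moreover $\sum_x\mu(\{x\})^2\le p\sum_x\mu(\{x\})\le p$, whence the displayed identity yields $p\ge\frac{1}{2}$.

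Now I would split into two cases. If $\mu(\{x_0\})=p$ for some $x_0\ne 0$, then by symmetry $\mu(\{-x_0\})=p$ as well and $x_0\ne -x_0$, so $2p\le\mu(\{x_0\})+\mu(\{-x_0\})\le 1$, forcing $p=\frac{1}{2}$; then $\mu(\{x_0\})+\mu(\{-x_0\})=1$, so $\mu=\frac{1}{2}(E_{x_0}+E_{-x_0})$ and (ii) holds with $a=x_0$. If instead $\mu(\{x\})<p$ for every $x\ne 0$, then the maximizer is $0$, so $\mu(\{0\})=p$, and combining $\sum_x\mu(\{x\})^2=\frac{1}{2}(1+p)$ with $\sum_x\mu(\{x\})^2\le p$ gives $p=1$, hence $\mu=E_0=\frac{1}{2}(E_0+E_0)$ and (ii) holds with $a=0$. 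This proves (i)$\Longrightarrow$(ii).

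I do not expect a genuine analytic obstacle; the work is entirely in the elementary bookkeeping — that the supremum of the atom masses is attained, that the two cases are exhaustive, and that they cannot both occur (otherwise $\mu(\{0\})+\mu(\{x_0\})+\mu(\{-x_0\})>1$). I would emphasise the point that makes the argument short: solving $\hat\mu(s)^2=\frac{1}{2}(1+\hat\mu(2s))$ directly is awkward, since its continuous solutions are not controlled without a priori regularity of $\hat\mu$, whereas a single point-mass comparison already forces $\mu$ to be a symmetric two-point distribution. Finally, this reasoning carries over verbatim to any second countable locally compact Abelian group $X$ with no nonzero compact subgroups: such a group is torsion free, so $\mu_2(\{0\})=\mu(\{0\})$ and $x_0\ne-x_0$ whenever $x_0\ne 0$, and one obtains $\mu=\frac{1}{2}(E_a+E_{-a})$ with $a\in X$.
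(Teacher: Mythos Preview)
Your argument is correct, and it is genuinely different from what the paper does. In the paper, Theorem~B is quoted from \cite{Kl2023} (where it is proved via intensively monotone operators), and in Remark~\ref{re18.1} an alternative analytic proof is sketched: one passes to the functional equation $f(2s)=2f^2(s)-1$ for the real characteristic function $f=\hat\mu$, matches $f$ with $g(s)=\cos as$ at a single point $s_0>0$, iterates the equation to force $f(s_0/2^n)=g(s_0/2^n)$ for all $n$, and then invokes Linnik's theorem to conclude $f\equiv g$. Your route avoids all of this: instead of solving the functional equation, you read off the atom at $0$ on both sides of $\mu*\mu=\tfrac12 E_0+\tfrac12\mu_2$ and squeeze the maximal atom mass $p$ between $\tfrac12$ and $\tfrac12$ (or force $p=1$). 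The steps are sound --- the identity $(\mu*\mu)(\{0\})=\sum_x\mu(\{x\})^2$ holds even if $\mu$ has a continuous part, the supremum $p$ is attained, and the two cases are exhaustive and mutually exclusive for the reason you give.

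What each approach buys: the paper's analytic route identifies $\hat\mu$ explicitly and fits the general philosophy of the article (solve the functional equation on the character group), but it needs an outside tool (Linnik's theorem, or Klebanov's operator method). Your argument is completely elementary and self-contained, and, as you note, it transfers verbatim to any torsion-free locally compact Abelian group --- in particular it gives a direct proof of Theorem~\ref{nth1} without passing through d'Alembert's equation or O'Connor's theorem. The paper's method, by contrast, yields d'Alembert's equation as an intermediate step, which is of independent interest (Corollary~\ref{nco.1}).
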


In this section we prove an analogue of this theorem for  
locally compact Abelian groups containing no nonzero compact subgroups.
The proof is based on the following statement proved by 
T.A.~O'Connor in \cite{O}.

\begin{E} Let $Y$ be a locally compact connected Abelian group. 
Let $h(y)$ be a continuous bounded real-valued function on $Y$
satisfying d'Alembert's functional equation\footnote{d'Alembert's 
functional equation
and its generalizations on various algebraic structures 
have been studied by many authors. A comprehensive overview of the 
corresponding results 
can be found in Chapters 8--10 of the monograph  by H.~Stetkaer \cite{St3}.} 
\begin{equation}\label{18.1}
 h(u+v)+h(u-v)=2h(u)h(v), \quad u, v\in Y,
 \end{equation}
and the condition $h(0)=1$. Then there is $x_0\in X$, where $X$ is 
the character group of the group $Y$, such that $h(y)={\rm Re}\ (x_0, y)$, 
$y\in Y$.
\end{E}

\begin{lemma}\label{nl1} Let  $X$ be a locally compact Abelian group 
with character group $Y$.
Let $\xi_1$  and $\xi_2$ be independent identically distributed 
random variables with values in $X$ and distribution  $\mu$. 
Let $\alpha$ be a random variable with a Bernoulli distribution taking values
$0$ and $1$ with probability $\frac{1}{2}$.
Assume that $\alpha$ is independent with the random vector $(\xi_1, \xi_2)$.
Then the following statements are equivalent:
\renewcommand{\labelenumi}{\rm(\roman{enumi})}
\begin{enumerate}
  
\item	

the linear forms 
$2\alpha\xi_1$ and \ $\xi_1+\xi_2$ 
are identically distributed; 

\item
the characteristic function $\hat\mu(y)$ satisfies the equation
\begin{equation}\label{n26.14}
 \hat\mu(2y)=2\hat\mu^2(y)-1, \quad y\in Y.
 \end{equation}
\end{enumerate}
\end{lemma}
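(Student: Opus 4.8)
The plan is to follow verbatim the scheme used for Lemma \ref{l1}, since the statement is once more a mechanical reformulation of ``identically distributed'' in terms of characteristic functions. First I would recall the two standard facts: if $\xi$ is an $X$-valued random variable with distribution $\mu$, then $\hat\mu(y)={\bf E}[(\xi, y)]$, and two $X$-valued random variables are identically distributed precisely when their characteristic functions coincide on $Y$. Consequently (i) is equivalent to the identity ${\bf E}[(2\alpha\xi_1, y)]={\bf E}[(\xi_1+\xi_2, y)]$ holding for all $y\in Y$.

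The second step is to compute the two expectations. For the form $\xi_1+\xi_2$, independence of $\xi_1$ and $\xi_2$ gives ${\bf E}[(\xi_1+\xi_2, y)]={\bf E}[(\xi_1, y)]{\bf E}[(\xi_2, y)]=\hat\mu^2(y)$. For the form $2\alpha\xi_1$, I would write $(2\alpha\xi_1, y)=(\xi_1, 2\alpha y)$ and condition on $\alpha$: since $\alpha$ is independent of $\xi_1$ and equals $0$ or $1$, each with probability $\frac{1}{2}$, the character $2\alpha y$ equals $0$ on the event $\{\alpha=0\}$ and $2y$ on the event $\{\alpha=1\}$, so ${\bf E}[(2\alpha\xi_1, y)]=\frac{1}{2}\bigl(1+{\bf E}[(\xi_1, 2y)]\bigr)=\frac{1}{2}\bigl(1+\hat\mu(2y)\bigr)$.

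Combining the two computations, (i) is equivalent to $\frac{1}{2}\bigl(1+\hat\mu(2y)\bigr)=\hat\mu^2(y)$ for every $y\in Y$, and rearranging this yields $\hat\mu(2y)=2\hat\mu^2(y)-1$, which is precisely equation (\ref{n26.14}). Hence (i) $\Longleftrightarrow$ (ii), as required, and in the write-up this can be presented as a single chain of $\Longleftrightarrow$'s in the same style as in the proof of Lemma \ref{l1}.

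I do not expect any genuine obstacle. The only place demanding a moment's attention is the treatment of the random coefficient $\alpha$ sitting inside a character: one must pass from $(2\alpha\xi_1, y)$ to $(\xi_1, 2\alpha y)$ and observe that $2\alpha y$ takes only the two values $0$ and $2y$, so that the expectation is the arithmetic mean of $1$ and $\hat\mu(2y)$. Everything else is the routine Fourier-transform manipulation on locally compact Abelian groups already carried out for Lemma \ref{l1}.
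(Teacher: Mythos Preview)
Your proposal is correct and is precisely the argument the paper has in mind: the paper omits the proof, stating only that it is analogous to that of Lemma~\ref{l1}, and your chain of equivalences is exactly that analogy carried out. The computations of both characteristic functions and the resulting rearrangement to equation~(\ref{n26.14}) are all accurate.
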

We omit the proof of the lemma, since it is analogous to the proof of Lemma \ref{l1}. 

\begin{theorem}\label{nth1} Let $X$ be a locally compact Abelian group
 containing no nonzero  compact subgroups.
Let $\xi_1$  and $\xi_2$ be independent identically distributed 
random variables with values in $X$ and a symmetric  
distribution  $\mu$. 
Let $\alpha$ be a random variable with a Bernoulli distribution taking values
$0$ and $1$ with probability $\frac{1}{2}$.
Assume that $\alpha$ is independent with the random vector $(\xi_1, \xi_2)$.
Then the following statements are equivalent:
\renewcommand{\labelenumi}{\rm(\roman{enumi})}
\begin{enumerate}
  
\item	

the linear forms $2\alpha\xi_1$ and $\xi_1+\xi_2$ 
are identically distributed; 

\item
 there is $x_0\in X$ such that $\mu=\frac{1}{2}\left(E_{x_0}+E_{-x_0}\right)$.
\end{enumerate}
\end{theorem}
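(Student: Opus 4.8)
\textit{Proof plan.} For (ii)$\,\Longrightarrow\,$(i) I would argue directly: if $\mu=\frac{1}{2}(E_{x_0}+E_{-x_0})$ then $\hat\mu(y)={\rm Re}\,(x_0,y)$, and since ${\rm Re}\,(x_0,2y)=2\big({\rm Re}\,(x_0,y)\big)^2-1$, the function $\hat\mu$ satisfies equation $(\ref{n26.14})$, so (i) follows from Lemma~\ref{nl1}. The substance is the converse. Assume (i). By Lemma~\ref{nl1}, $\hat\mu(2y)=2\hat\mu^2(y)-1$ for all $y\in Y$. I would first recast this as an identity between distributions: the law of $\xi_1+\xi_2$ is $\mu*\mu$, and the law of $2\alpha\xi_1$ is $\frac{1}{2}E_0+\frac{1}{2}\mu^{(2)}$, where $\mu^{(2)}$ denotes the image of $\mu$ under the continuous homomorphism $x\mapsto 2x$; hence
\begin{equation*}
\mu*\mu=\frac{1}{2}E_0+\frac{1}{2}\mu^{(2)}.
\end{equation*}
Since $X$ has no nonzero compact subgroups it has no element of order $2$ (such an element would generate a finite, hence compact, subgroup), so $\mu^{(2)}(\{0\})=\mu(\{x\in X:2x=0\})=\mu(\{0\})$.

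The key step is to evaluate both sides at the singleton $\{0\}$. Since $\mu$ is symmetric,
\begin{equation*}
\mu*\mu(\{0\})=\int_X\mu(\{-x\})\,d\mu(x)=\sum_x\mu(\{x\})^2,
\end{equation*}
the sum being over the (at most countably many) atoms of $\mu$, whereas the measure identity gives $\mu*\mu(\{0\})=\frac{1}{2}+\frac{1}{2}\mu(\{0\})$. Hence $\sum_x\mu(\{x\})^2=\frac{1}{2}+\frac{1}{2}\mu(\{0\})\ge\frac{1}{2}$, so $\mu$ has atoms; let $p$ be the largest atomic mass (it is attained). As every atom has mass at most $p$, we get $\sum_x\mu(\{x\})^2\le p\sum_x\mu(\{x\})\le p$, so $p\ge\frac{1}{2}$. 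If this largest mass is attained at some $x_0\ne 0$, then by symmetry $\mu(\{-x_0\})=p$ and $x_0\ne-x_0$, so $2p\le\mu(X)=1$; together with $p\ge\frac{1}{2}$ this forces $p=\frac{1}{2}$, whence $\mu(\{x_0\})+\mu(\{-x_0\})=1$ and $\mu=\frac{1}{2}(E_{x_0}+E_{-x_0})$. If the largest mass is attained only at $0$, i.e.\ $p=\mu(\{0\})$, then $\frac{1}{2}+\frac{1}{2}p\le p$ forces $p=1$, i.e.\ $\mu=E_0=\frac{1}{2}(E_0+E_0)$. In either case $\mu=\frac{1}{2}(E_{x_0}+E_{-x_0})$ for some $x_0\in X$, which is (ii).

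The step I expect to be the main obstacle is exactly this passage from the functional equation to a statement about atoms: equation $(\ref{n26.14})$ only ties together the values of $\hat\mu$ at $y$ and $2y$ and a priori says nothing about point masses, and it is the reformulation $\mu*\mu=\frac{1}{2}E_0+\frac{1}{2}\mu^{(2)}$, whose right side carries an atom of mass $\ge\frac{1}{2}$ at $0$, that makes $\mu$ itself atomic. An alternative endgame is available through the O'Connor theorem: since $X$ is topologically isomorphic to $\mathbb{R}^n\times D$ with $D$ discrete and torsion-free, the character group $Y$ is topologically isomorphic to $\mathbb{R}^n\times\widehat D$ and hence connected, so if one verifies that the real-valued function $\hat\mu$ satisfies d'Alembert's equation $(\ref{18.1})$, the O'Connor theorem yields $\hat\mu(y)={\rm Re}\,(x_0,y)$ and therefore $\mu=\frac{1}{2}(E_{x_0}+E_{-x_0})$; the difficulty there is to derive $(\ref{18.1})$ from the duplication equation, which the direct argument above circumvents.
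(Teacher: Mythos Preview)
Your argument is correct and takes a genuinely different route from the paper's. The paper proceeds via harmonic analysis: it invokes the structure theorem to write $X\cong\mathbb{R}^n\times G$ with $G$ discrete torsion-free, so that $Y$ is connected; it then pushes the duplication equation down to $\mathbb{R}$ along dense one-parameter subgroups, applies Theorem~B to obtain $\hat\mu=\cos(as)$ on those lines, upgrades this to d'Alembert's equation~(\ref{18.1}) on all of $Y$ by density, and finally invokes the O'Connor theorem to conclude $\hat\mu(y)=\mathrm{Re}\,(x_0,y)$. Your proof bypasses all of this by working directly with the measure identity $\mu*\mu=\tfrac12 E_0+\tfrac12\mu^{(2)}$ and reading off the atom at $0$; the only structural input you need is the absence of $2$-torsion (immediate from the hypothesis), and the rest is the elementary inequality $\sum_x\mu(\{x\})^2\le p$. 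This is shorter and more self-contained---it uses neither Theorem~B nor the O'Connor theorem---and in fact shows that local compactness and the full connectedness of $Y$ are irrelevant: only ``no element of order $2$'' is used. The paper's approach, on the other hand, ties the problem to d'Alembert's equation and thereby motivates the question raised in Remark~\ref{re18.2}. One minor point worth making explicit in your write-up: the maximum atomic mass $p$ is attained because at most $\lfloor 1/\varepsilon\rfloor$ atoms can have mass $\ge\varepsilon$, so the supremum is realized among finitely many candidates.
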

\begin{proof} By the structure theorem for locally compact Abelian groups,
each such group is topologically isomorphic to a group of the form
$\mathbb{R}^n\times G$, where $n\ge 0$ and a locally compact Abelian group
$G$ contains a compact open subgroup (\!\!\cite[(24.30)]{HeRo1}).
Since the group $X$  contains no nonzero  compact subgroups, this compact open 
subgroup is the zero, i.e., $G$ 
is a discrete group. Moreover, $G$ is a torsion free group.  Denote by $Y$ the character 
group of the group $X$. Then $Y$ is topologically isomorphic to a group of the form
$\mathbb{R}^n\times H$, where $H$ is the character group of the group $G$.
To avoid introducing additional notation, we assume that $X=\mathbb{R}^n\times G$ and
$Y=\mathbb{R}^n\times H$.  

Let us prove (i)$\Longrightarrow$(ii). Inasmuch as the linear forms $2\alpha\xi_1$ 
and $\xi_1+\xi_2$ 
are identically distributed,
by Lemma \ref{nl1}, the characteristic function $\hat\mu(y)$
satisfies equation (\ref{n26.14}). 

1. First suppose that $X=G$. Then $Y=H$. Since 
$G$ is a  discrete torsion free Abelian group, $H$ is a compact connected Abelian group 
(\!\!\cite[(23.17), (24.25)]{HeRo1}). Hence
there is a continuous homomorphism
$\tau: \mathbb{R}\rightarrow H$ such that $\tau(\mathbb{R})$ 
is dense in $H$ (\!\!\cite[(9.2), (25.18)]{HeRo1}).
 Consider the restriction of equation (\ref{n26.14}) to the subgroup 
 $\tau(\mathbb{R})$, i.e., assume that $y=\tau(s)$, $s\in \mathbb{R}$,
  in (\ref{n26.14}).
Put $f(s)=\hat\mu(\tau(s))$, 
 $s\in \mathbb{R}$.
 Then $f(s)$ is a characteristic function on $\mathbb{R}$
 satisfying the equation 
\begin{equation}\label{1n26.14}
 f(2s)=2f^2(s)-1, \quad s\in \mathbb{R}.
 \end{equation}
Since $\mu$ is a symmetric distribution,  the function $f(s)$ is real-valued.
It follows from Lemma \ref{nl1} and Theorem B that $f(s)=\cos as$ for 
some real number $a$. 
Hence the function $f(s)$ satisfies  d'Alembert's functional equation
on $\mathbb{R}$. This implies
that the function $\hat\mu(y)$ satisfies  d'Alembert's functional equation 
on the subgroup $\tau(\mathbb{R})$.
 Since the subgroup $\tau(\mathbb{R})$ is dense in $H$, the function $\hat\mu(y)$ 
 satisfies  d'Alembert's functional equation on the group $H$. By O'Connor 
 theorem,  applying to the function 
 $h(y)=\hat\mu(y)$, $y\in H$, there is $x_0\in X$ such that 
 $\hat\mu(y)={\rm Re}\ (x_0, y)$, $y\in H$.
 Hence $$\mu=\frac{1}{2}\left(E_{x_0}+E_{-x_0}\right).$$ 
 
 2. Let us prove the theorem supposing that $X=\mathbb{R}^n$. Then $Y=\mathbb{R}^n$.
Denote by $K$ the minimal closed subgroup of $\mathbb{R}^n$ containing the support of
$\mu$ and by $L$ the character group of the group $K$. 
We can consider $\xi_1$  and $\xi_2$ as independent random variables 
with values in $K$. Then, by Lemma \ref{l3}, 
\begin{equation}\label{20.1}
\{y\in L:\hat\mu(y)=1\}=\{0\}.
 \end{equation}
It is well-known that any closed subgroup of the group $\mathbb{R}^n$ is 
topologically isomorphic to a group of the form $\mathbb{R}^p\times \mathbb{Z}^q$, 
where $p+q\le n$. Without loss of generality, we may assume that
$K=\mathbb{R}^p\times \mathbb{Z}^q$ and  
$L=\mathbb{R}^p\times \mathbb{T}^q$. 

Suppose that $p>0$. 
Fix $(s_1,\dots,s_p)\in \mathbb{R}^p\subset L$ and denote by $\pi$ the 
homomorphism
$\pi: \mathbb{R}\rightarrow \mathbb{R}^p\subset L$ of the form 
$$\pi(s)=(ss_1,\dots,ss_p), \quad s\in \mathbb{R}.
$$
Put $f(s)=\hat\mu(\pi(s))$,  $s\in \mathbb{R}$.
 Then $f(s)$ is a characteristic function on $\mathbb{R}$, and
$f(s)$ satisfies  equation (\ref{1n26.14}) because  $\hat\mu(y)$
satisfies equation (\ref{n26.14}). Since $\mu$ is a symmetric distribution,  
the function $f(s)$ is real-valued.
It follows from Lemma
\ref{nl1} and Theorem B that $f(s)=\cos as$ for some real number $a$. Since
$f(s)=\hat\mu(\pi(s))$, this  
 contradicts (\ref{20.1}). Hence $p=0$, i.e., $K=\mathbb{Z}^q$. The 
assertion of the theorem follows from the statement proved in item 1.

3. Let us prove the theorem in the general case, 
when $X=\mathbb{R}^n\times G$, where $n>0$, and $G\ne\{0\}$.
Let $\tau$ be the continuous homomorphism
$\tau: \mathbb{R}\rightarrow H$ such as in item 1. Denote by $(s_1,\dots,s_n,h)$, 
where $(s_1,\dots,s_n)\in \mathbb{R}^{n}$, $h\in H$, elements of the group 
$Y=\mathbb{R}^n\times H$ and by
$\kappa$
  the continuous homomorphism 
$\kappa: \mathbb{R}^{n+1}\rightarrow \mathbb{R}^n\times H$ of the form
$$
\kappa(s_1,\dots,s_n,s_{n+1})=(s_1,\dots,s_n,\tau(s_{n+1})), \quad
(s_1,\dots,s_n,s_{n+1})\in \mathbb{R}^{n+1}.
$$ 
Then the subgroup $\kappa(\mathbb{R}^{n+1})$ is dense in $\mathbb{R}^n\times H$.
Consider the restriction of equation (\ref{n26.14}) to the subgroup 
 $\kappa(\mathbb{R}^{n+1})$, i.e., assume that $y=\kappa(s_1,\dots,s_n,s_{n+1})$, 
 $(s_1,\dots,s_n,s_{n+1})\in \mathbb{R}^{n+1}$,
  in (\ref{n26.14}).
Put $$f(s_1,\dots,s_n,s_{n+1})=\hat\mu(\kappa(s_1,\dots,s_n,s_{n+1})), 
 \quad (s_1,\dots,s_n,s_{n+1})\in \mathbb{R}^{n+1}.$$
 Then $f(s_1,\dots,s_n,s_{n+1})$ is a characteristic function on $\mathbb{R}^{n+1}$
 satisfying the equation 
\begin{equation*} 
 f(2(s_1,\dots,s_n,s_{n+1}))=2f^2(s_1,\dots,s_n,s_{n+1})-1, \quad (s_1,\dots,s_n,s_{n+1})\in \mathbb{R}^{n+1}.
 \end{equation*}
Since $\mu$ is a symmetric distribution,  
the function $f(s_1,\dots,s_n,s_{n+1})$ is real-valued.
Taking into account Lemma \ref{nl1}, it follows from the statement proved in item 2, 
that there is $(t_1,\dots,t_n,t_{n+1})\in \mathbb{R}^{n+1}$ such that
$$
f(s_1,\dots,s_n,s_{n+1})=\cos(t_1s_1+\dots+t_ns_n+t_{n+1}s_{n+1}), \quad 
(s_1,\dots,s_n,s_{n+1})\in \mathbb{R}^{n+1}.
$$
Hence the function $f(s_1,\dots,s_n,s_{n+1})$ satisfies  d'Alembert's functional equation
on $\mathbb{R}^{n+1}$. This implies
that the function $\hat\mu(y)$ satisfies  d'Alembert's functional equation 
on the subgroup $\kappa(\mathbb{R}^{n+1})$.
 Since the subgroup $\kappa(\mathbb{R}^{n+1})$ is dense in $Y$, the function $\hat\mu(y)$ 
 satisfies  d'Alembert's functional equation on the group $Y$. By O'Connor 
 theorem,  applying to the function 
 $h(y)=\hat\mu(y)$, $y\in Y$, there is $x_0\in X$ such that 
 $\hat\mu(y)={\rm Re}\ (x_0, y)$, $y\in H$.
 Hence $\mu=\frac{1}{2}\left(E_{x_0}+E_{-x_0}\right)$. 
  
The statement (ii)$\Longrightarrow$(i) is obvious.
\end{proof}

\begin{corollary}\label{nco.1} Let $Y$ be a locally compact connected Abelian group. 
Let $h(y)$ be a real-valued characteristic function on $Y$
satisfying the equation 
\begin{equation}\label{n19.1}
h(2y)=2h^2(y)-1, \quad y\in Y.
 \end{equation}
Then there is $x_0\in X$, where $X$ is 
the character group of the group $Y$, such that $h(y)={\rm Re}\ (x_0, y)$, 
$y\in Y$.
\end{corollary}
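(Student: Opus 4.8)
The plan is to deduce the corollary directly from Theorem \ref{nth1}. The first step is a duality observation: since $Y$ is connected, its character group $X$ contains no nonzero compact subgroups. Indeed, if $K$ were a nonzero compact subgroup of $X$, then the restriction homomorphism $Y = \widehat{X} \to \widehat{K}$ would be a continuous surjection onto the nonzero \emph{discrete} group $\widehat{K}$; but the continuous image of the connected group $Y$ in a discrete group is a single point, hence $\widehat{K} = \{0\}$ and $K = \{0\}$, a contradiction. Thus $X$ satisfies the hypothesis of Theorem \ref{nth1}.

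Next I would translate the analytic hypothesis into the probabilistic language of Theorem \ref{nth1}. Since $h(y)$ is a characteristic function on $Y$, there is a distribution $\mu$ on $X$ with $\hat\mu(y) = h(y)$ for all $y \in Y$; because $h(y)$ is real-valued, $\mu$ is symmetric. Equation (\ref{n19.1}) for $h(y)$ is literally equation (\ref{n26.14}) for $\hat\mu(y)$. Let $\xi_1$ and $\xi_2$ be independent random variables with values in $X$ and distribution $\mu$, and let $\alpha$ be a Bernoulli random variable taking the values $0$ and $1$ with probability $\frac12$, independent of $(\xi_1, \xi_2)$. By Lemma \ref{nl1}, the linear forms $2\alpha\xi_1$ and $\xi_1 + \xi_2$ are identically distributed.

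Now Theorem \ref{nth1} applies and yields $x_0 \in X$ with $\mu = \frac12\left(E_{x_0} + E_{-x_0}\right)$. Consequently $h(y) = \hat\mu(y) = \frac12\left((x_0, y) + \overline{(x_0, y)}\right) = {\rm Re}\,(x_0, y)$ for all $y \in Y$, which is the desired conclusion. I do not expect any genuine obstacle here; the only point requiring care is the opening duality remark, which is exactly what licenses the use of Theorem \ref{nth1}. (Alternatively, one could reprove the corollary from scratch along the lines of the proof of Theorem \ref{nth1}, restricting equation (\ref{n19.1}) to a dense one-parameter subgroup of $Y$, invoking Theorem B and then O'Connor's theorem; but routing through Theorem \ref{nth1} is the shortest path.)
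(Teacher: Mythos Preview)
Your proof is correct and follows exactly the route the paper intends: the corollary is stated immediately after Theorem \ref{nth1} without a separate proof, so it is meant to be read off from that theorem via the duality fact that a connected locally compact Abelian group $Y$ has a dual $X$ with no nonzero compact subgroups. You spell out this duality step and the translation through Lemma \ref{nl1} carefully, which the paper leaves implicit; nothing further is needed.
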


We note that equation (\ref{n19.1}) follows from 
d'Alembert's functional equation (\ref{18.1}) if we put $u=v=y$ in (\ref{18.1}).

\section{Some Remarks}

\begin{remark}\label{re18.1}
To prove Theorems A and B
in \cite{Kl2023}, 
L.B.~Klebanov used the method of intensively monotone 
operators that he had developed earlier (see the monograph 
 by  A.V.~Kakosyan, L.B.~Klebanov, and J.A.~Melamed \cite{KaKlMe}). We note that 
the following Linnik's theorem\footnote{Formally speaking, this statement 
was not proven by Yu.V.~Linnik, but can be easily 
derived with the help of arguments which he 
used for the proof of Theorem 4.2.1 in
\cite{Li} (for the proof of Linnik's theorem see \cite{RR}).} 
implies Theorems A and B.
\begin{D}
Assume that a characteristic function $g(s)$ has moments of all orders and 
$g(s)$ is uniquely defined by them, in particular $g(s)$ can be
an analytical. Let $f(s)$ be a characteristic 
function such that $f(s_n)=g(s_n)$, where $s_n$ is a sequence such that
$s_n\rightarrow 0$ as $n\rightarrow\infty$. Then $f(s)=g(s)$ for all 
$s\in\mathbb{R}$.
\end{D}
1. By Lemma \ref{l1}, to prove Theorem A it suffices to show that 
if a real-valued characteristic function
$f(s)$ satisfies equation (\ref{1.5}), then $f(s)$ is represented in the 
form (\ref{1.3}) for some real $\sigma$. Take $s_0>0$ in such a way that 
$0< f(s)\le 1$ for all $0\le s\le s_0$. Find $\sigma$ 
from the equation
$$
f(s_0)=\frac{2}{e^{\sigma s_0}+{e^{-\sigma s_0}}}
$$
and put 
$$
g(s)=\frac{2}{e^{\sigma s}+{e^{-\sigma s}}}, \quad s\in \mathbb{R}.
$$
The function $g(s)$ is analytical in some neighborhood of the zero. 
 We have $f(s_0)=g(s_0)$. Since both the 
functions $f(s)$ and $g(s)$ satisfy 
equation (\ref{1.5}), we find sequentially from (\ref{1.5}) the equalities
\begin{equation}\label{23.06.1}
f\left(\frac{s_0}{2^n}\right)=g\left(\frac{s_0}{2^n}\right), \quad n=1, 2, \dots. 
 \end{equation}

By Linnik's theorem, $f(s)=g(s)$ for all $s\in\mathbb{R}$. Moreover, 
this argument shows that in order to assert that $f(s)=g(s)$ for all $s\in\mathbb{R}$,
 it suffices to assume that the function $f(s)$ satisfies equation 
 (\ref{1.5}) in some neighborhood of the zero, but not on the entire real line.
 
2. By Lemma \ref{nl1}, to prove Theorem B it suffices to show that 
if a real-valued characteristic function
$f(s)$ satisfies equation (\ref{1n26.14}), then $f(s)=\cos as$  
 for some real $a$. Take $s_0>0$ in such a way that 
$0< f(s)\le 1$ for all $0\le s\le s_0$. Put
$$
a=\frac{\arccos f(s_0)}{s_0} 
$$
and
$$
g(s)=\cos as, \quad s\in \mathbb{R}.
$$
The function $g(s)$ is analytical in some neighborhood of the zero. We have
 $f(s_0)=g(s_0)$ and $g(s)>0$ for all $0\le s\le s_0$.

We continue our reasoning in the same way as in item 1, using 
equation (\ref{1n26.14}) to obtain (\ref{23.06.1}) instead of equation (\ref{1.5}).
\end{remark}

\begin{remark}\label{re18.2}
O'Connor's theorem  was generalized in \cite{L}
 for 
arbitrary Abelian groups. Therefore, 
we can formulate the following natural question: is Corollary \ref{nco.1} true 
if we omit the condition  that the group $Y$ is connected.
If the answer is positive, then it will follow that Theorem \ref{nth1}
is valid for an arbitrary locally compact Abelian group.
\end{remark}

\bigskip

\medskip

\noindent B. Verkin Institute for Low Temperature Physics and Engineering of the National\\  Academy of Sciences of Ukraine, 47, Nauky ave, Kharkiv, 61103, Ukraine\\

\noindent E-mail: feldman@ilt.kharkov.ua


\begin{thebibliography}{99}

\bibitem{A1}  Almira, J.M.: Characterization of polynomials as solutions of 
certain functional equations. J. Math. Anal. Appl.
\textbf{459},     1016--1028  (2018). https://doi.org/10.1016/j.jmaa.2017.11.036 

\bibitem{A2} Almira, J.M.: Using Aichinger's equation to characterize polynomial 
functions. Aequationes Math. \textbf{97}, 899--907 (2023).  
https://doi.org/10.1007/s00010-022-00926-x  

\bibitem{AS1} Almira, J.M., Shulman, E.V.: On certain generalizations of the Levi--Civita 
and Wilson functional equations. Aequationes Math. \textbf{91}, 921--931 (2017).
https://doi.org/10.1007/s00010-017-0489-4  


\bibitem{ChP} Chistyakov, G.P., Pergamentsev,  S.Yu.: On random linear 
forms, Pr\'epublication de L'Institut Fourier \textbf{399}, 1--15  (1997). 
 https://www-fourier.univ-grenoble-alpes.fr/sites/default/files /ref\_399.pdf

\bibitem{ChGe} Chistyakov, G.P.,  G\"{o}tze, F.: Independence of linear forms 
with random coefficients. 
Probab. Theory Relat. Fields \textbf{137},  1--24 (2007). 
https://doi.org/10.1007/s00440-006-0503-6

 \bibitem{Fe5} Feldman, G.: Functional equations and characterization problems
on locally compact Abelian groups. EMS Tracts in Mathematics, \textbf{5}, 
European Mathematical Society, Zurich, (2008)

\bibitem{Fe10} Feldman, G.M.: The Heyde theorem for  locally compact Abelian
groups.  J.   Funct. Anal.  \textbf{ 258},
3977--3987 (2010). https://doi.org/10.1016/j.jfa.2010.03.005

\bibitem{Fe20} Feldman, G.: On a characterization theorem for
connected locally compact Abelian groups. J. Fourier Anal. Appl.  \textbf{ 26}(14), 
1--22 (2020). https://doi.org/10.1007/s00041-019-09721-w

\bibitem{Fe21} Feldman, G.: Generalized P\'{o}lya's theorem on ${\bm a}$-adic 
solenoids. Results Math. \textbf{76}(124), 1--14 (2021). 
https://doi.org/10.1007/s00025-021-01443-0

\bibitem{Febooknew} Feldman, G.:  {Characterization of Probability 
Distributions on Locally Compact Abelian Groups}.  Mathematical Surveys and Monographs, \textbf{273}, 
American Mathematical Society, Providence, RI (2023)  

\bibitem{F-RIMA2025} Feldman, G.: An Analogue of Heyde's Theorem for a
 Certain Class of Compact Totally  Disconnected Abelian Groups and
 $p$-quasicyclic Groups. Results Math. \textbf{80}(63), 1--21 (2025). 
 https://doi.org/10.1007/s00025-025-02382-w

\bibitem{Fi} Fischer, M.J.: Generalized hyperbolic secant distributions.
With applications to finance. Springer Briefs Stat.,
Springer, Heidelberg,   (2014) 

\bibitem{HeRo1}   Hewitt, E., Ross, K. A.: { Abstract
Harmonic Analysis, \textbf{1}: Structure of topological groups, 
integration theory, group representations}, 2nd ed., Grundlehren der Mathematischen 
Wissenschaften, \textbf{115}, Springer-Verlag, New York--Berlin  (1994)
		
\bibitem{HR}	Heyer, H., Rall,  Ch.: Gau\ss sche Wahrscheinlichkeitsma\ss e 
auf Corwinschen Gruppen,
Math. Z.  \textbf{128},   343--361 (1972). https://doi.org/10.1007/BF01111573

\bibitem{KaKlMe} Kakosyan, A.V.,  Klebanov, L.B., Melamed, L.B.: Characterization of 
distributions by the method of intensively monotone operators. Lecture Notes in Math., 
\textbf{1088}, Springer-Verlag, Berlin  (1984).

\bibitem{Kl2023} Klebanov, L.B.: Characterizations of probability distributions 
by the properties of linear forms with random coefficients. J. Math. Sci. 
\textbf{273},  778--786  (2023). https://doi.org/10.1007/s10958-023-06540-9

\bibitem{Li} Linnik, Yu. V.:   Decomposition of probability distributions.
Dover, New York; Oliver and Boyd, Edinburgh,  (1964)

\bibitem{L} Ljubenova, E. T.: 
 On D'Alembert's functional equation on a Abelian group.
Aequationes Math.  \textbf{22},   54--55 (1981). https://doi.org/10.1007/BF02190159

\bibitem{M2020} Myronyuk, M.: On a group analogue of the Heyde theorem. Forum Mathematicum. \textbf{32},  307--318 (2020).  https://doi.org/10.1515/forum-2019-0190 

\bibitem{M2023} Myronyuk, M.: The Heyde theorem on a group $\mathbb{R}^n\times D$,
where $D$
 is a discrete Abelian group, J. Theor. Probab.  \textbf{36},   593--604  (2023).  
 https://doi.org/10.1007/s10959-022-01168-y
 
\bibitem{O}  O'Connor, T.A.:
A solution of D'Alembert's functional equation on a locally compact Abelian group.
Aequationes Math.  \textbf{15},   235--238 (1977).
https://doi.org/10.1007/BF01835653

\bibitem{RR} Ramachandran, B, Rao, C.R.: Some results on characteristic 
functions and characterizations of the normal and generalized stable laws.
Sankhy\=a Ser. A \textbf{30},    125--140  (1968).
https://www.jstor.org/stable/25049522

\bibitem{SaSh1} Sablik, M., Shulman, E.: Exponential rationals.
Results Math. \textbf{79}(35),   1--11  (2024). 
https://doi.org/10.1007/s00025-023-02060-9

\bibitem{Sh1} Shulman, E.: On and around the balanced Cauchy equation. 
Aequationes Math. (2025).  https://doi.org/10.1007/s00010-025-01155-8. 

\bibitem{St3} Stetkaer, H: Functional equations on groups. World Scientific, 
Singapore, (2013)

\bibitem{LiZi} Zinger, A. A., Linnik, Yu. V.: Nonlinear statistics 
and random linear forms. Proc. Steklov Inst. Math. 
 \textbf{111},   25--44 (1972).

 

\end{thebibliography}
\end{document}